\pgfplotsset{width=10cm,compat=1.9}
\newtheorem{theorem}{Theorem}[section]
\newtheorem{proposition}{Proposition}[section]
\newtheorem{corollary}{Corollary}[section]
\newtheorem{lemma}{Lemma}[section]
\theoremstyle{definition}
\newtheorem{remark}{Remark}[section]
\newtheorem{definition}{Definition}
\newtheorem{example}{Example}
\renewcommand{\L}{\mathcal{L}}
\newcommand{\A}{\mathcal{A}}
\newcommand{\C}{\mathbb{C}}
\newcommand{\R}{\mathbb{R}}
\newcommand{\N}{\mathbb{N}}
\newcommand{\M}{\mathcal{M}}
\DeclareMathOperator{\supp}{supp}
\begin{document}

\begin{titlepage}

\newcommand{\HRule}{\rule{\linewidth}{0.5mm}} 

\center 
\includegraphics[width=10cm]{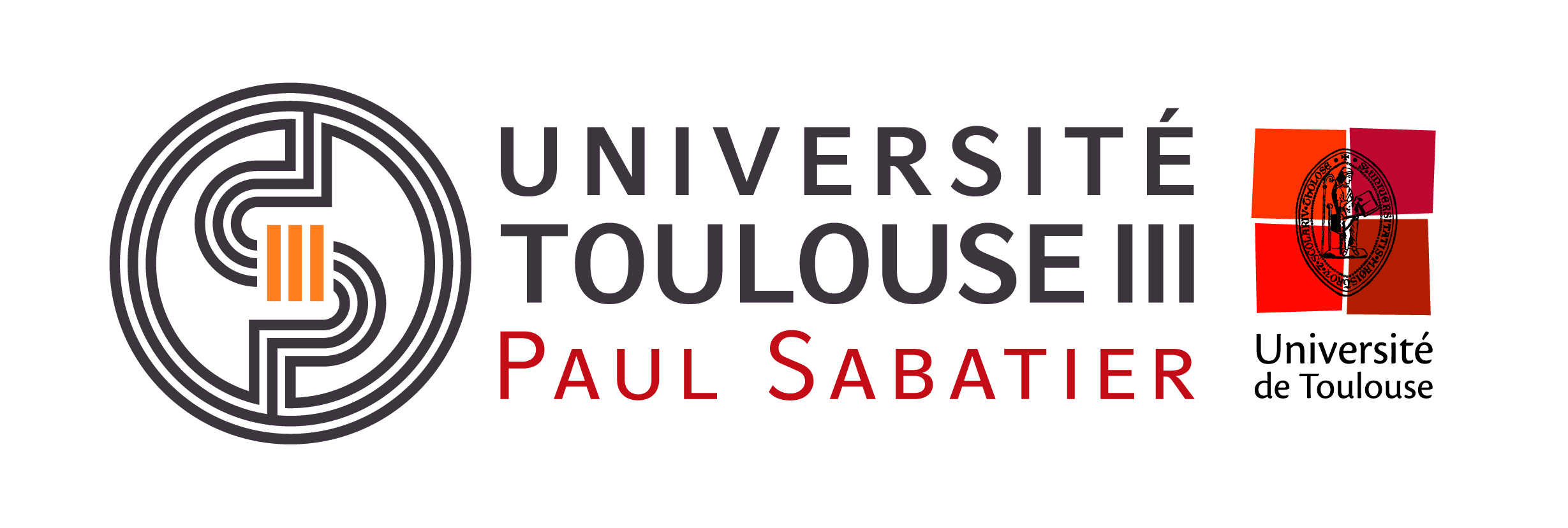}\\[2.5cm]

\textbf{\LARGE Master Thesis}\\[1.5cm]  

\HRule \\[0.4cm]
{ \huge \bfseries Approximate and null controllability of a parabolic system with coupling terms of order one}\\[0.4cm]
\HRule \\[3.5cm]

\begin{minipage}{0.4\textwidth}
\begin{flushleft} \large
\emph{Supervisor}\\

Franck \textsc{Boyer}\\ 
\end{flushleft}
\begin{flushleft} \large
\emph{Student}\\
Amélie \textsc{Dupouy}\\ 

\end{flushleft}

\end{minipage}\\[3cm]

{\large March - June 2023}\\[2cm] 

\vfill 

\end{titlepage}
\clearpage
\shipout\null
\tableofcontents 

\newpage
\section{Introduction}

\subsection{Presentation of the problem}
We consider in this work the following type of systems 
\begin{equation} \label{eq:intro}
    \left \{
    \begin{aligned}
        \partial_t y + \mathcal{A} y &= \mathcal{B}v \\
        y(0) = y^0
    \end{aligned}
    \right.
\end{equation}
where $y$ is the unknown, $\mathcal{A}$ is an elliptic operator, $v$ is the control function, and $\mathcal{B}$ is the control operator. \\
Our goal is to study two controllability properties of the system, called \textit{approximate controllability} and \textit{null controllability}. These notions will be defined precisely in further sections; for now, we give a general idea. \\
A system is said to be approximately controllable at time $T$ if its solution can be driven to be as close as wanted to a target data at the instant $T$. This is the subject of the second section of this report. \\
The notion of null controllability is more precise. Indeed, our target data is now precisely $0$, and not only do we want to steer the solution close to it, but precisely on it. We will study this issue in the third and last part of this report. 
\subsection{Two-component cascade case}
We consider the following problem:
\begin{equation} \label{eq:D}
\left \{
\begin{aligned}
    \partial_{t} y + \L y &= \begin{pmatrix} 0 & 0 \\ -q(x) & 0 \end{pmatrix}y + \begin{pmatrix} 0 & 0 \\ -p(x) & 0 \end{pmatrix} \partial_{x}y + \mathbb{1}_{\omega} Bv \\
    y(0,\cdot) = y(\pi,\cdot) &= 0 \\
    y(\cdot, 0) &= y^{0}
\end{aligned}
\right.
\end{equation}

where $T>0$, $y \in C^{0}([0,T], L^{2}(0, \pi)^2)$ is the unknown, $y^{0} \in L^{2}(0, \pi)^2$ is the initial data, $B = \begin{pmatrix}
        1 \\ 0
\end{pmatrix}$, $v \in L^{2}((0,T) \times (0, \pi))$ is the control acting only in an open subset $\omega$ of $(0, \pi)$, $p \in W^{1, \infty}(0, \pi)$, $q \in L^{\infty}(0, \pi)$ are the coupling terms, and $\L = \mathfrak{L} Id$, with $\mathfrak{L} $ being the scalar operator defined by 
$$ \mathfrak{L}   = -\partial_{x}(\gamma(x)\partial_{x}\cdot) + \gamma_{0}(x) \cdot$$
with domain $\mathcal{D}(\mathfrak{L}) = \{u \in H_{0}^{1}(0,\pi), \: \mathfrak{L}  u \in L^{2}(0,\pi)\}$, $\gamma,\gamma_0 \in L^{\infty}(0,\pi)$ and $\displaystyle \inf_{(0,\pi)} \gamma > 0$. 
\clearpage

The study conducted during this internship is mainly based on two papers. The first one, by F. Boyer and G. Olive \cite{boyer}, states results about the approximate controllability of parabolic systems with coupling terms of order zero. The second by M. Duprez \cite{duprez} deals with the case with coupling terms of order one, in the particular case where the control domain is an interval and the parabolic operator is the Laplacian operator. 

The goal of this internship is then to generalise the results given in these articles, to a case where there is a coupling term of order one and the control domain $\omega$ is not necessarily an interval. To study the approximate controllability, we mainly use two results: the Fattorini-Hautus test (see Theorem \ref{th:fh}) and a unique continuation property (Theorem \ref{th:uc}). For the null controllabilty, we will use the moments method in order to find a suitable control function for our system. 
\vspace{1em}

The first part of the study will consist in establishing useful spectral and technical results for the rest of the paper. \\
The second section will focus on a result stated by M. Duprez in \cite{duprez}, given in Theorem \ref{th:1}. It appears that even if this theorem is correct in most cases, it can happen that the hypotheses are verified and yet the result does not apply. Hence, we propose a new version of this theorem for the approximate controllability (see Theorem \ref{th:AC}), and give a counter-example to the original result in Section \ref{counterex}. \\
The last part of this report deals with the null controllabilty of our system, in the most possible general case with $\omega$ being an open subset of our domain, and $\mathcal{L}$ the general parabolic operator defined earlier. 
\vspace{1em}

Now, let us define the two notions of controllability that will be studied:
\begin{itemize}
    \item System \eqref{eq:D} is said to be \emph{approximately controllable} at time $T$ if for all $\varepsilon>0$ and all $y^{0}, y^{1} \in L^{2}(0,\pi)^{2}$ there exists a control $v \in L^{2}(Q_{T})$ such that the solution to the system satisfies
    $$
    \Vert y(T)-y^{1} \Vert_{L^{2}(0, \pi)^{2}} \leq \varepsilon.
    $$
    \item System \eqref{eq:D} is said to be \emph{null controllable} at time $T$ if for every initial condition $y^{0} \in L^{2}(0,\pi)^{2}$ there exists a control $v \in L^{2}(Q_{T})$ such that the solution to the system satisfies
    $$
    y(T) \equiv 0 \quad \textnormal{in} \: (0, \pi).
    $$
\end{itemize}
We know that null controllability at some time $T$ implies approximate controllability at this same time $T$.
In \cite{duprez}, the following result is stated. 
\clearpage
\begin{theorem} \label{th:1}
    Let us suppose that $p \in W^{1,\infty}(0,\pi) \: \cap \: W^{2,\infty}(\omega)$, $q \in L^{\infty}(0,\pi) \: \cap \:  W^{1,\infty}(\omega)$ and
    $$
    (\supp p \cup \supp q) \cap \omega \neq \emptyset
    $$
    Then the system is null controllable at any time T.
\end{theorem}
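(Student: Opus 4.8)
The plan is to establish null controllability by the moments method, as announced in the introduction, rather than by proving the dual observability estimate directly. Writing $y=(y_1,y_2)$, the system reads $\partial_t y_1+\mathfrak{L}y_1=\mathbb{1}_\omega v$ and $\partial_t y_2+\mathfrak{L}y_2=-q\,y_1-p\,\partial_x y_1$, so the single scalar control $v$ acts directly only on $y_1$ and must steer $y_2$ to zero \emph{indirectly}, through the cascade coupling. Let $A$ be the generator of the system; its adjoint is the block–triangular operator
\[
A^\ast=\left(\begin{smallmatrix}\mathfrak{L} & \mathcal{P}\\[2pt] 0 & \mathfrak{L}\end{smallmatrix}\right),\qquad \mathcal{P}=q-p'-p\,\partial_x,
\]
where $\mathcal{P}$ is the formal adjoint of the first–order coupling and the relevant observation is $\mathbb{1}_\omega\varphi_1$. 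The first ingredient is the spectral data of $\mathfrak{L}$ provided by the preliminary section: real eigenvalues $\lambda_1<\lambda_2<\cdots\to+\infty$ with $L^2$–orthonormal eigenfunctions $\phi_k$, the Weyl asymptotics $\lambda_k\sim ck^2$, and the gap estimate $\sqrt{\lambda_{k+1}}-\sqrt{\lambda_k}\geq\delta>0$, which is exactly what is needed to build biorthogonal families in time.

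The second step is the spectral analysis of the coupled operator. Since $A^\ast$ is triangular with $\mathfrak{L}$ on the diagonal, its eigenvalues are again the $\lambda_k$ and $\Phi_k=(\phi_k,0)$ is always an eigenvector. The decisive quantity is the coupling coefficient
\[
I_k=\langle\mathcal{P}\phi_k,\phi_k\rangle=\int_0^\pi\Big(q-\tfrac12 p'\Big)\phi_k^2\,dx,
\]
obtained after one integration by parts using the Dirichlet conditions. When $I_k\neq0$, the Fredholm alternative for the self–adjoint $\mathfrak{L}-\lambda_k$ shows that $\lambda_k$ carries a genuine Jordan block of size two, with a generalised eigenvector $\Psi_k$ solving $(A^\ast-\lambda_k)\Psi_k=\Phi_k$ and of size $\sim 1/|I_k|$. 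Propagating $\Phi_k$ and $\Psi_k$ along the backward adjoint flow produces the time families $e^{-(T-t)\lambda_k}$ and $(T-t)\,e^{-(T-t)\lambda_k}$, which are the exponentials the moment problem is built upon.

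Testing the Duhamel identity $y(T)=0$ against $\Phi_k$ and $\Psi_k$ then turns null controllability into a moment problem: find $v\in L^2((0,T)\times\omega)$ whose weighted pairings against $\phi_k$ and against the first component $u_k$ of $\Psi_k$ reproduce the coefficients of $-e^{-TA}y^0$. I would solve it with the ansatz $v(t,x)=\sum_k\big(a_k\,\theta_{k,0}(T-t)+b_k\,\theta_{k,1}(T-t)\big)\rho_k(x)$, where $\{\theta_{k,j}\}$ is a biorthogonal family to $\{t^je^{-\lambda_kt}\}_{k\geq1,\,j\in\{0,1\}}$ satisfying the sharp bound $\|\theta_{k,j}\|_{L^2(0,T)}\leq C_\varepsilon e^{\varepsilon\sqrt{\lambda_k}}$ (its existence follows from the gap estimate), and the $\rho_k\in L^2(\omega)$ are spatial profiles built by biorthogonalising the restrictions $\phi_k|_\omega$ and $u_k|_\omega$ so that the spatial pairings decouple. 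Because the initial–data coefficients carry a factor $e^{-\lambda_kT}$ from parabolic smoothing, this gain easily absorbs the subexponential growth $e^{\varepsilon\sqrt{\lambda_k}}$ of the biorthogonal family, and the series defining $v$ converges in $L^2$ — provided the factors $1/|I_k|$ do not grow too fast.

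The main obstacle is precisely that last proviso. The construction costs a factor $1/|I_k|$ per mode, so convergence requires a quantitative lower bound, of the form $|I_k|\geq C\,e^{-\eta\sqrt{\lambda_k}}$ with $\eta$ small relative to $T$, guaranteeing in particular $I_k\neq0$ for every $k$. Extracting such a bound from the hypotheses is the heart of the matter, and here the assumption $(\supp p\cup\supp q)\cap\omega\neq\emptyset$ looks too weak: $I_k$ is a \emph{global} integral over $(0,\pi)$ that feels $p$ and $q$ everywhere and has no a priori link to the control set $\omega$, while the oscillation of $\phi_k^2$ can drive $I_k$ to zero for infinitely many $k$ even when the support condition holds. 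A secondary difficulty, specific to this thesis, is that $\omega$ need not be an interval, so the spatial biorthogonalisation of the $\phi_k|_\omega$ is genuinely more delicate than in \cite{duprez}. I would therefore expect the correct statement to require an extra non–degeneracy condition on the $I_k$, and I suspect that the failure of the bare support hypothesis to supply it is exactly the mechanism behind the counter–example announced in Section \ref{counterex}.
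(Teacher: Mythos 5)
You were right not to push this proof through: the paper does not prove Theorem \ref{th:1} at all --- it refutes it. Your closing diagnosis, that the bare support hypothesis $(\supp p\cup\supp q)\cap\omega\neq\emptyset$ is too weak and that this is the mechanism behind the announced counter-example, is exactly the paper's position, so on the bottom line you and the paper agree.

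However, the obstruction you isolate is not the one the paper actually exploits, and on its own it would not disprove the statement. You locate the difficulty in the need for a quantitative lower bound $|I_k|\geq Ce^{-\eta\sqrt{\lambda_k}}$, ``guaranteeing in particular $I_k\neq 0$''. In the paper the vanishing of $I_{\lambda_k}(p,q)$ is harmless: by Proposition \ref{prop:spec} it only means $\lambda_k$ is geometrically double, with two independent eigenvectors $\Phi_{\lambda_k},\Psi_{\lambda_k}$, and the moments method of Section 4 handles that branch separately (the factor $1/|I_{\lambda_k}|$ never appears there). The genuine obstruction is approximate controllability itself. Combining the Fattorini--Hautus test (Theorem \ref{th:fh}) with the unique continuation property (Theorem \ref{th:uc}), the paper shows (Theorem \ref{th:AC}) that approximate controllability holds if and only if for every $\lambda$ with $-q\phi_\lambda+\partial_x(p\phi_\lambda)=0$ in $\omega$ the family of moments $\M_\lambda(-q\phi_\lambda+\partial_x(p\phi_\lambda),\omega)$ over the connected components of $\overline{(0,\pi)\setminus\omega}$ is nonzero. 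The counter-example of Section \ref{counterex} takes $q=0$, $\omega=(a,b)$, sets $p=1/\phi_\lambda$ on $\omega$ so that $\partial_x(p\phi_\lambda)\equiv 0$ there, and then tunes $p$ on $(0,a)$ and $(b,\pi)$ so that the two remaining moments $\int_0^a\partial_x(p\phi_\lambda)\phi_\lambda$ and $\int_b^\pi\partial_x(p\phi_\lambda)\phi_\lambda$ vanish; this produces a nontrivial eigenfunction of $\A^*$ whose observed component vanishes on $\omega$, so even approximate controllability fails while $\supp p\cap\omega\neq\emptyset$. Note this is a local/unique-continuation phenomenon on $\omega$ and its complement, not a smallness statement about the global integral $I_k$. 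A secondary correction: where the paper does run the moments method (Section 4), it does so under the \emph{opposite} hypothesis $(\supp p\cup\supp q)\cap\omega=\emptyset$, and the conclusion is a possibly positive minimal time $T_0$, so ``null controllable at any time $T$'' would be a further overreach even when approximate controllability holds.
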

We will see in further parts that this theorem cannot be true,
for we can exhibit cases in which the hypotheses are satisfied,
and yet the system is not approximately controllable, hence not null controllable. 
\vspace{1em}

In order to study this system, we will need its adjoint system, that is:
\begin{equation} \label{eq:dual}
\left \{
    \begin{aligned}
         -\partial_{t} \xi + \mathcal{A}^* \xi &= 0  &\textnormal{in} \: (0,T) \times (0,\pi)  \\
         \xi(T) &= \xi_{F} &\textnormal{in} \: (0, \pi)
    \end{aligned}
\right.
\end{equation}

where we define $\mathcal{A}^* = \L  - A_{0}^{*}(-q(x)\cdot + \partial_{x}(p(x) \cdot))$ with $A_{0} = \begin{pmatrix} 0 & 0 \\ 1 & 0 \end{pmatrix}$. \\
The solution $\xi$ to this system is given by 
\begin{equation}
    \xi(t) = e^{-(T-t)\A^*} \xi_F
\end{equation}
where $\left( e^{-\cdot \A^*} \right)$ is the semigroup generated in $L^2(0,\pi)^2$ by $\A^*$.

\section{Spectral properties of the adjoint operator and main tools}
In order to study the controllability of our system, we will need to use some spectral properties of the operator $\A^*$.

\subsection{Spectrum, eigenfunctions and biorthogonal basis}

The operator $\mathfrak{L}$ is selfadjoint and admits a countable set of eigenvalues $\Lambda$ that are all geometrically simple. For each $\lambda \in \Lambda$ we denote $\phi_{\lambda}$ a normalized associated eigenfunction. The set $\left(\phi_{\lambda}\right)_{\lambda \in \Lambda}$ of such eigenfunctions forms a Hilbert basis of $L^2(0,\pi)$.\\
Let us define the following quantity, for all $\lambda \in \Lambda$ :

\begin{center}
$I_{\lambda}(p,q)\coloneqq \displaystyle \int_{0}^{\pi}\left(q-\frac{1}{2} \partial_{x}p\right)\phi_{\lambda}^{2}$.
\end{center}

\begin{lemma}
    The following problem has solutions in $L^2(0,\pi)$: 
    \begin{equation} \label{pbdefpsi}
    \left \{
    \begin{aligned}
        \mathfrak{L} \psi - \lambda \psi &= \partial_{x}(p\phi_{\lambda}) - q\phi_{\lambda} + I_{\lambda}(p,q) \phi_{\lambda} \\
        \psi(0) &= \psi(\pi) = 0.
    \end{aligned}
    \right.
    \end{equation}
\end{lemma}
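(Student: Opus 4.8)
The plan is to exploit the self-adjoint spectral structure of $\mathfrak{L}$ together with the Fredholm alternative. Since $\lambda \in \Lambda$ is an eigenvalue of the self-adjoint operator $\mathfrak{L}$, the operator $\mathfrak{L}-\lambda$ fails to be invertible: because the eigenvalues are geometrically simple, its kernel is exactly $\mathrm{span}(\phi_{\lambda})$. By the Fredholm alternative (equivalently, by the Hilbert-basis decomposition), problem \eqref{pbdefpsi} admits a solution $\psi \in \mathcal{D}(\mathfrak{L})$ if and only if the right-hand side $f := \partial_{x}(p\phi_{\lambda}) - q\phi_{\lambda} + I_{\lambda}(p,q)\phi_{\lambda}$ is orthogonal in $L^{2}(0,\pi)$ to $\ker(\mathfrak{L}-\lambda) = \mathrm{span}(\phi_{\lambda})$. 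So the entire lemma reduces to verifying the single scalar identity $\langle f, \phi_{\lambda}\rangle_{L^{2}} = 0$.

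First I would check that $f$ genuinely lies in $L^{2}(0,\pi)$, so that the problem is well posed. Since $p \in W^{1,\infty}(0,\pi)$ and $\phi_{\lambda} \in H_{0}^{1}(0,\pi)$, the product $p\phi_{\lambda}$ belongs to $H^{1}(0,\pi)$, whence $\partial_{x}(p\phi_{\lambda}) \in L^{2}$; likewise $q\phi_{\lambda} \in L^{2}$ because $q \in L^{\infty}$, and $I_{\lambda}(p,q)$ is a finite constant. Hence $f \in L^{2}(0,\pi)$.

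The core step is the computation of $\langle f, \phi_{\lambda}\rangle$. The key is to integrate the first-order term by parts: using $\phi_{\lambda}(0) = \phi_{\lambda}(\pi) = 0$, so that all boundary terms vanish,
\[
\int_{0}^{\pi}\partial_{x}(p\phi_{\lambda})\,\phi_{\lambda} = -\int_{0}^{\pi} p\,\phi_{\lambda}\,\partial_{x}\phi_{\lambda} = -\tfrac{1}{2}\int_{0}^{\pi} p\,\partial_{x}(\phi_{\lambda}^{2}) = \tfrac{1}{2}\int_{0}^{\pi}(\partial_{x}p)\,\phi_{\lambda}^{2}.
\]
Adding the contributions of the remaining two terms and using that $\phi_{\lambda}$ is normalized, one obtains
\[
\langle f, \phi_{\lambda}\rangle = \tfrac{1}{2}\int_{0}^{\pi}(\partial_{x}p)\,\phi_{\lambda}^{2} - \int_{0}^{\pi} q\,\phi_{\lambda}^{2} + I_{\lambda}(p,q).
\]
By the very definition $I_{\lambda}(p,q) = \int_{0}^{\pi}\left(q-\tfrac{1}{2}\partial_{x}p\right)\phi_{\lambda}^{2}$, this right-hand side vanishes identically. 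This is precisely the purpose of subtracting $I_{\lambda}(p,q)\phi_{\lambda}$ in \eqref{pbdefpsi}: that term is exactly the $L^{2}$-projection of $\partial_{x}(p\phi_{\lambda}) - q\phi_{\lambda}$ onto $\phi_{\lambda}$, engineered so that the compatibility condition holds.

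I expect the main (indeed the only genuine) obstacle to be this orthogonality check, and in particular getting the integration by parts on the transport term right so that it cancels exactly against $I_{\lambda}(p,q)$. Everything else is soft: once $\langle f, \phi_{\lambda}\rangle = 0$ is established, the solution can even be written explicitly as $\psi = \sum_{\mu \in \Lambda,\, \mu \neq \lambda} \frac{\langle f, \phi_{\mu}\rangle}{\mu - \lambda}\,\phi_{\mu}$, and its membership in $\mathcal{D}(\mathfrak{L})$ (hence the Dirichlet conditions) follows because $\lambda$ is isolated in $\Lambda$, so $\inf_{\mu \neq \lambda}|\mu-\lambda| > 0$ and $\mu^{2}/(\mu-\lambda)^{2}$ stays bounded, giving $\|\mathfrak{L}\psi\|_{L^{2}} \lesssim \|f\|_{L^{2}} < \infty$.
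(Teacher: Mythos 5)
Your proposal is correct and follows essentially the same route as the paper: both reduce the lemma to checking that the right-hand side is $L^2$-orthogonal to $\phi_{\lambda}$, verify this by integrating the transport term by parts so that it cancels against $I_{\lambda}(p,q)$, and then conclude via self-adjointness of $\mathfrak{L}$ (Fredholm alternative) that the right-hand side lies in $\operatorname{im}(\mathfrak{L}-\lambda)$. The explicit eigenfunction-series representation of $\psi$ and the check that $f\in L^{2}$ are harmless additions that the paper leaves implicit.
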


\begin{proof}
    We have
    $$
    \begin{aligned}
        \displaystyle \int_{\Omega} (\partial_{x}(p\phi_{\lambda}) - q\phi_{\lambda} + I_{\lambda}(p,q) \phi_{\lambda})\phi_{\lambda} &= - I_{\lambda}(p,q) + I_{\lambda}(p,q)  =0.
    \end{aligned}
    $$
    Thus $\partial_{x}(p\phi_{\lambda}) - q\phi_{\lambda} + I_{\lambda}(p,q) \phi_{\lambda} \in \textnormal{ span}(\phi_{\lambda})^{\bot} = \ker{(\mathfrak{L} - \lambda)}^\bot$. Since $\mathfrak{L}$ is selfadjoint, this means
    $$\partial_{x}(p\phi_{\lambda}) - q\phi_{\lambda} + I_{\lambda}(p,q) \phi_{\lambda} \in \textnormal{ im} (\mathfrak{L} - \lambda).$$
\end{proof}

\begin{definition}
    We define $\psi_{\lambda}$ to be the unique solution of \eqref{pbdefpsi} that satisfies
    \begin{equation} \label{eq:ortho}
        (\phi_{\lambda}, \psi_{\lambda})_{L^2(\omega)} = 0.
    \end{equation}
\end{definition}

\begin{proposition} \label{prop:spec}
    \begin{enumerate}
        \item The spectrum of $\A^*$ is exactly $\Lambda$ and is only composed of eigenvalues.
        \item The eigenvalue $\lambda$ is geometrically simple if and only if $I_{\lambda}(p,q) \neq 0$. In that case, $\Phi_{\lambda}\coloneqq 
            \begin{pmatrix}
                \phi_{\lambda} \\
                0
            \end{pmatrix}$ 
        is an eigenfunction, and $\Psi_{\lambda}\coloneqq
            \begin{pmatrix}
                \psi_{\lambda} \\
                \phi_{\lambda}
            \end{pmatrix}$
        an associated generalized eigenfunction. More precisely, we have:
        $$
        \left \{
        \begin{aligned}
            (\A^* - \lambda)\Phi_{\lambda} &= 0 \\
            (\A^* - \lambda)\Psi_{\lambda} &= I_{\lambda}(p,q) \Phi_{\lambda}.
        \end{aligned}
        \right.
        $$
        \item The eigenvalue $\lambda$ is geometrically double if and only if $I_{\lambda}(p,q) = 0$. In that case, $\Phi_{\lambda}\coloneqq 
            \begin{pmatrix}
                \phi_{\lambda} \\
                0
            \end{pmatrix}$  and
        $\Psi_{\lambda}\coloneqq
            \begin{pmatrix}
                \psi_{\lambda} \\
                \phi_{\lambda}
            \end{pmatrix}$
        are eigenfunctions of the operator $\A^*$ associated to the eigenvalue $\lambda$. More precisely, we have:
        $$
        \left \{
        \begin{aligned}
            (\A^* - \lambda)\Phi_{\lambda} &= 0 \\
            (\A^* - \lambda)\Psi_{\lambda} &= 0. 
        \end{aligned}
        \right.
        $$
    \end{enumerate}
\end{proposition}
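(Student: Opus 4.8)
The plan is to exploit the block-triangular structure of $\A^*$. Writing $\xi=(\xi_1,\xi_2)$ and using $A_0^*=\begin{pmatrix}0&1\\0&0\end{pmatrix}$, a direct computation gives
\[
\A^*\xi=\begin{pmatrix}\mathfrak{L}\xi_1+q\xi_2-\partial_x(p\xi_2)\\[2pt] \mathfrak{L}\xi_2\end{pmatrix},
\]
so the eigenvalue equation $(\A^*-\lambda)\xi=0$ decouples into the triangular system
\[
(\mathfrak{L}-\lambda)\xi_2=0,\qquad (\mathfrak{L}-\lambda)\xi_1=\partial_x(p\xi_2)-q\xi_2 .
\]
Since $\mathfrak{L}$ has compact resolvent and the coupling $\xi\mapsto A_0^*\!\left(-q\xi+\partial_x(p\xi)\right)$ is a first–order, hence $\L$-compact, perturbation, the operator $\A^*$ again has compact resolvent. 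Its spectrum is therefore discrete and consists only of eigenvalues, which settles the ``only eigenvalues'' assertion of item~1.

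For the equality $\sigma(\A^*)=\Lambda$ I would argue both inclusions from the triangular system. If $\lambda$ is an eigenvalue with eigenvector $\xi\neq0$, the second equation forces $\xi_2\in\ker(\mathfrak{L}-\lambda)$; if $\xi_2\neq0$ then $\lambda\in\Lambda$, while if $\xi_2=0$ the first equation gives $(\mathfrak{L}-\lambda)\xi_1=0$ with $\xi_1\neq0$, so again $\lambda\in\Lambda$. Conversely, for $\lambda\in\Lambda$ the vector $\Phi_\lambda=(\phi_\lambda,0)$ already satisfies $(\A^*-\lambda)\Phi_\lambda=0$, so $\lambda$ is an eigenvalue.

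The heart of the argument is the dichotomy of items~2 and~3, which I would obtain by a Fredholm analysis of $\ker(\A^*-\lambda)$. Because the eigenvalues of $\mathfrak{L}$ are geometrically simple, $\ker(\mathfrak{L}-\lambda)=\mathrm{span}(\phi_\lambda)$, so any $\xi\in\ker(\A^*-\lambda)$ has $\xi_2=c\,\phi_\lambda$, and then $\xi_1$ is determined, up to a multiple of $\phi_\lambda$, by $(\mathfrak{L}-\lambda)\xi_1=c\,(\partial_x(p\phi_\lambda)-q\phi_\lambda)$; in particular $\dim\ker(\A^*-\lambda)\le 2$. For $c\neq0$ this scalar equation is solvable if and only if its right-hand side lies in $\mathrm{im}(\mathfrak{L}-\lambda)=\mathrm{span}(\phi_\lambda)^{\perp}$ (using selfadjointness of $\mathfrak{L}$), i.e.\ if and only if $\int_0^\pi(\partial_x(p\phi_\lambda)-q\phi_\lambda)\phi_\lambda=0$. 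Integrating the first term by parts and using $\phi_\lambda(0)=\phi_\lambda(\pi)=0$ rewrites $\int_0^\pi\partial_x(p\phi_\lambda)\phi_\lambda$ as $\tfrac12\int_0^\pi(\partial_xp)\phi_\lambda^2$, so the compatibility condition is exactly $-I_\lambda(p,q)=0$. Hence a second independent eigenfunction (one with nonzero second component) exists precisely when $I_\lambda(p,q)=0$, giving geometric multiplicity $2$, and multiplicity $1$ otherwise.

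It remains to exhibit the explicit (generalized) eigenfunctions and the Jordan relation. Taking $\psi_\lambda$ the solution of \eqref{pbdefpsi} furnished by the preceding Lemma and Definition, I would simply verify component by component that $\Psi_\lambda=(\psi_\lambda,\phi_\lambda)$ obeys $(\A^*-\lambda)\Psi_\lambda=I_\lambda(p,q)\Phi_\lambda$: the second component is $(\mathfrak{L}-\lambda)\phi_\lambda=0$, while the first, $(\mathfrak{L}-\lambda)\psi_\lambda+q\phi_\lambda-\partial_x(p\phi_\lambda)$, equals $I_\lambda(p,q)\phi_\lambda$ by the defining equation of $\psi_\lambda$. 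When $I_\lambda(p,q)=0$ this makes $\Psi_\lambda$ a genuine eigenfunction, independent of $\Phi_\lambda$ by its nonzero second component (item~3); when $I_\lambda(p,q)\neq0$ it is a generalized eigenfunction closing a length-two Jordan chain (item~2). The main obstacle here is not conceptual but careful bookkeeping: correctly identifying the Fredholm compatibility integral with $-I_\lambda(p,q)$ through the integration by parts, and making sure the compact-resolvent (relative compactness) argument is invoked so that item~1 genuinely concerns point spectrum rather than arbitrary spectral values.
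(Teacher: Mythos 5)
Your proposal is correct and follows essentially the same route as the paper: the block-triangular decomposition of $(\A^*-\lambda)\xi=0$, the compact-resolvent argument for item~1, the integration by parts identifying $\int_0^\pi(\partial_x(p\phi_\lambda)-q\phi_\lambda)\phi_\lambda$ with $-I_\lambda(p,q)$, and the component-wise verification that $(\A^*-\lambda)\Psi_\lambda=I_\lambda(p,q)\Phi_\lambda$. Your Fredholm analysis of $\ker(\A^*-\lambda)$ (showing $\xi_2=c\,\phi_\lambda$, $\dim\ker(\A^*-\lambda)\le 2$, and that a second eigenfunction exists precisely when the compatibility integral vanishes) is in fact slightly more complete than the paper's argument, which only exhibits the (generalized) eigenfunctions and leaves the ``only if'' direction of the simplicity/doubleness dichotomy implicit.
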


\begin{definition}
    We denote $\Lambda_1$ the set of all the simple eigenvalues of $\A^*$, and $\Lambda_2$ the set of all the double ones.
\end{definition}

\begin{proof}
    We begin with the proof of \textit{1.}. \\
    First, since $\A^*$ has compact resolvent, its spectrum is only composed of its eigenvalues. \\
    Let $s \in \C$, $u \in L^2(0,\pi)^2$ such that $\A^* u = su$. We have 
    $$\A^* u =su \Leftrightarrow \left \{ \begin{aligned}
        \mathfrak{L} u_1 + qu_2 - \partial_{x}(pu_2) &= su_1 \\
        \mathfrak{L} u_2 \phantom{\; \: + qu_2 - \partial_{x}(pu_2)} &= su_2
    \end{aligned}
    \right.$$
If $u_2=0$, the first equation gives that $s$ is necessarily an eigenvalue $\lambda$ of $\mathfrak{L}$, and if $u_2 \neq 0$, the second equation gives the same result. This means $\sigma(\A^*) \subseteq \Lambda$.
    Conversely, take $\lambda \in \Lambda$. We want to show that there exists a vector $u \in \mathcal{D}(\A^*)$ such that $(\A^* - \lambda)u = 0$. Let $u \in \mathcal{D}(\A^*)$.
    \begin{equation}
        \begin{aligned}
            (\A^*-\lambda)u = \begin{pmatrix}
                \mathfrak{L} u_1 + qu_2 - \partial_{x}(pu_2) - \lambda u_1 \\
                \mathfrak{L} u_2 - \lambda u_2.
            \end{pmatrix}
        \end{aligned}
    \end{equation}
    Let us choose $u_2 = 0$, and $u_1 = \phi_{\lambda}$. Then both of the coordinates are zero, and we can conclude that $\lambda$ is an eigenvalue of $\A^*$. The first point is then proved. 
\vspace{1em}

Now, we prove the second and third points. In both cases, we know by definition that $\Phi_{\lambda}$ is an eigenfunction of $\A^*$ associated to the eigenvalue $\lambda$. We begin with the third point, and assume that $I_{\lambda} = 0$. Then $\psi_{\lambda}$ is the solution to 
    $$
    \left \{
    \begin{aligned}
        \mathfrak{L} \psi - \lambda \psi - \partial_{x}(p\phi_{\lambda}) + q\phi_{\lambda} &= 0 \\
        \psi(0) = \psi(\pi) &= 0.
    \end{aligned}
    \right.
    $$
    Hence $\Psi_{\lambda}$ is an eigenfunction of $\A^*$ associated with $\lambda$, and it is linearly independant from $\Phi_{\lambda}$, so the eigenvalue $\lambda$ is double, and we proved \textit{3.}. \\
    We use the same reasoning to prove the second point, $I_{\lambda}(p,q) \neq 0$ implying that $\Psi_{\lambda}$ is now a generalized eigenfunction of $\A^*$ associated with $\lambda$.
\end{proof}

\subsection{Useful tools}

To study the controllabilty of our system we will mainly use two tools that we introduce in this section. The first one is a very practical characterization of the approximate controllabilty, called the Fattorini-Hautus test, given in \cite{boyer}.
\begin{theorem} \label{th:fh}
    System \eqref{eq:D} is approximately controllable if and only if for any $s \in \C$ and any $u \in \mathcal{D}(\A^*)$ we have
    \begin{equation} \label{eq:fh}
    \left.
        \begin{aligned}
             (\A^*-s) u &=0 &\textnormal{in} \: (0, \pi)  \\
             B^{*} u &=0 &\textnormal{in} \: \omega
        \end{aligned}
        \right \} \Rightarrow u = 0.
    \end{equation}
\end{theorem}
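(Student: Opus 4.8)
The plan is to pass through the standard duality characterization of approximate controllability and then to exploit the discreteness of $\sigma(\A^*)=\Lambda$ together with the analyticity of the semigroup $\left(e^{-\cdot\A^*}\right)$. First I would reduce approximate controllability to a unique continuation property for the adjoint. By linearity it suffices to treat $y^0=0$, and by the Hahn--Banach theorem approximate controllability is equivalent to the density of the reachable set $\{y(T)\}$ in $L^2(0,\pi)^2$, i.e. to the implication: if $\xi_F$ is orthogonal to every reachable state then $\xi_F=0$. Pairing \eqref{eq:D} with the solution $\xi(t)=e^{-(T-t)\A^*}\xi_F$ of \eqref{eq:dual} and integrating by parts in $t$, using $(\A y,\xi)_{L^2}=(y,\A^*\xi)_{L^2}$, yields
\begin{equation*}
    (y(T),\xi_F)_{L^2} = \int_0^T\!\!\int_\omega v\,(B^*\xi)\,dx\,dt
\end{equation*}
since $y(0)=0$. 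Hence $\xi_F$ is orthogonal to the reachable set for every $v$ if and only if $B^*\xi=0$ on $(0,T)\times\omega$, so that \eqref{eq:D} is approximately controllable if and only if $B^*e^{-(T-t)\A^*}\xi_F=0$ on $(0,T)\times\omega$ forces $\xi_F=0$.

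The implication ``approximately controllable $\Rightarrow$ \eqref{eq:fh}'' I would obtain by contraposition: if a nonzero $u\in\mathcal D(\A^*)$ satisfies $(\A^*-s)u=0$ and $B^*u=0$ on $\omega$, then $\xi_F=u$ gives $\xi(t)=e^{-(T-t)s}u$, so $B^*\xi=e^{-(T-t)s}B^*u=0$ on $(0,T)\times\omega$ while $\xi_F\neq0$, contradicting the previous characterization.

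For the converse I would assume \eqref{eq:fh} and use that $\A^*$ has compact resolvent, hence discrete spectrum $\Lambda$, and generates an analytic semigroup. Expanding $\xi_F$ over the (generalized) eigenfunctions of Proposition \ref{prop:spec}, whose Jordan blocks have size at most two, the contribution of the spectral projection onto the eigenspace of $\lambda$ to $B^*\xi(t)$ is, on $\omega$,
\begin{equation*}
    e^{-(T-t)\lambda}\Big[\big(a-(T-t)\,I_\lambda(p,q)\,b\big)\,\phi_\lambda + b\,\psi_\lambda\Big]\big|_\omega ,
\end{equation*}
where $a,b$ are the coefficients of $\Phi_\lambda,\Psi_\lambda$ and $B^*\Phi_\lambda=\phi_\lambda$, $B^*\Psi_\lambda=\psi_\lambda$. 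By analyticity in $t$ and the linear independence of the family $\{(T-t)^k e^{-(T-t)\lambda}\}$ over distinct $\lambda$ and $k$, the vanishing of $B^*\xi$ on $(0,T)\times\omega$ forces the bracket above to vanish on $\omega$ for each $\lambda$. In the simple case $I_\lambda(p,q)\neq0$ the leading term in $(T-t)$ gives $b\,\phi_\lambda\equiv0$ on $\omega$, whence $b=0$ since an eigenfunction of $\mathfrak L$ cannot vanish on the open set $\omega$ by unique continuation, and then $a=0$; in the double case $I_\lambda(p,q)=0$ the bracket is exactly $B^*u$ for $u=a\Phi_\lambda+b\Psi_\lambda$, so \eqref{eq:fh} forces $a=b=0$. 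Summing over $\lambda\in\Lambda$ gives $\xi_F=0$, the desired unique continuation.

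The hard part will be the converse, and specifically the justification that the family $(\Phi_\lambda,\Psi_\lambda)_{\lambda\in\Lambda}$ is complete (ideally a Riesz basis) in $L^2(0,\pi)^2$, so that the expansion of $\xi_F$ converges and the term-by-term identification above is legitimate. Because of the triangular structure of $\A^*$ the second component reduces to $\mathfrak L$, whose normalized eigenfunctions already form a Hilbert basis of $L^2(0,\pi)$; the delicate point is to control the first components, i.e. the growth of the $\psi_\lambda$ produced by the first-order coupling, which is where a relative-boundedness argument viewing $\A^*$ as a lower-order perturbation of $\mathfrak L\,\mathrm{Id}$ is needed. By contrast the linear independence of the exponential-polynomial family and the fact that $\phi_\lambda|_\omega\not\equiv0$ are comparatively routine.
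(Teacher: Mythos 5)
The paper does not actually prove this theorem: it is imported from \cite{boyer} and used as a black box, so there is no internal proof to compare yours against. That said, your outline is the classical Fattorini argument and its first two steps are sound: the duality reduction (approximate controllability is equivalent to the unique continuation property ``$B^{*}e^{-(T-\cdot)\A^{*}}\xi_F=0$ on $(0,T)\times\omega$ implies $\xi_F=0$'') and the necessity of \eqref{eq:fh}, obtained by feeding an eigenvector of $\A^{*}$ into the adjoint flow, are both correct.

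The gap you flag in the sufficiency direction is genuine, but you are aiming at a stronger property than you need, and the stronger property is the wrong thing to chase here. A Riesz basis of root vectors would be delicate: $\Psi_{\lambda}=(\psi_{\lambda},\phi_{\lambda})^{T}$ is not normalized, and the paper's own estimate \eqref{eq:boundpsipi} allows $\Vert\psi_{\lambda_k}\Vert_{L^2(0,\pi)}$ to grow like $\sqrt{\lambda_k}$, so a termwise identification inside a series expansion of $\xi_F$ is not legitimate without substantial extra work. Fattorini's proof avoids any basis: from $B^{*}\xi=0$ on $(0,T)\times\omega$ and analyticity of the semigroup one gets $B^{*}e^{-t\A^{*}}\xi_F=0$ in $\omega$ for all $t>0$; taking the Laplace transform and continuing analytically gives $B^{*}(\A^{*}+s)^{-1}\xi_F=0$ in $\omega$ for every $s$ outside $-\Lambda$, and the residues of this meromorphic function at each pole produce exactly the finitely many conditions of your ``bracket'' on the spectral projection of $\xi_F$ onto the root subspace of $\lambda$. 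One then descends along the Jordan chain of Proposition \ref{prop:spec}: the nilpotent part applied to $\xi_F$ is an eigenvector killed by $B^{*}$ on $\omega$, hence vanishes by \eqref{eq:fh}, and then the projection itself is an eigenvector killed by $B^{*}$ on $\omega$, hence also vanishes. After that, only \emph{completeness} of the root vectors is needed, and here it is immediate from the triangular structure: if $w=(w_1,w_2)$ is orthogonal to every $\Phi_{\lambda}$ then $w_1=0$ because $(\phi_{\lambda})_{\lambda}$ is a Hilbert basis, and orthogonality to every $\Psi_{\lambda}$ then forces $w_2=0$. With that substitution your case analysis (simple versus double $\lambda$, and the fact that an eigenfunction of $\mathfrak{L}$ cannot vanish on the open set $\omega$) closes the proof.
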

Then, we also need a unique continuation property in order to determine whether a non-homogeneous scalar problem admits a solution that vanishes identically in a subset of the whole domain. \\
First, let us define a family of vectors that will be very useful in the rest of this work.

\begin{definition}
We denote by $\mathcal{C} \left( \overline{(0,\pi) \setminus \omega}\right)$ the set of all connected components of  $\overline{(0,\pi) \setminus \omega}$. For all $C \in \mathcal{C} \left(\overline{(0,\pi) \setminus \omega}\right)$ and $f \in L^{1}(0,\pi)$ we define $M_{\lambda}(f, C) \in \R^{2}$ as follows
$$
M_{\lambda}(f, C) = \left \{ 
\begin{aligned}
    \begin{pmatrix}
        \displaystyle \int_{C} f \phi_{\lambda} \\
        0
    \end{pmatrix}
    & \: \textnormal{if} \: C \cap \{ 0, \pi \} \neq \emptyset, \\
    \begin{pmatrix}
        \displaystyle \int_{C} f \phi_{\lambda} \\ 
        \displaystyle \int_{C} f \Tilde{\phi}_{\lambda}
    \end{pmatrix}
    & \: \textnormal{if} \: C \cap \{ 0, \pi \} = \emptyset.
\end{aligned}
\right.
$$
Then we define the following family of vectors of $\R^{2}$: 
$$
\M_{\lambda}(f,\omega) = (M_{\lambda}(f, C))_{C \in \mathcal{C} \left( \overline{(0,\pi) \setminus \omega}\right)} \in (\R^{2})^{\mathcal{C} \left( \overline{(0,\pi) \setminus \omega}\right)}.
$$
\end{definition}

We can now state and prove the following unique continuation property, also stated in \cite{boyer}.

\begin{theorem} \label{th:uc}
    Let $F \in L^{2}(0, \pi)$ and $\omega$ be a non-empty open subset of $(0, \pi)$.  Let $\lambda \in \Lambda$. There exists a solution $u \in \mathcal{D}(\mathfrak{L})$ to the following problem
    \begin{equation} \label{eq:uc1}
        \left \{
        \begin{aligned}
            \mathfrak{L}  u - \lambda u &= F  &\textnormal{in} \: (0,\pi)  \\
             u &= 0 &\textnormal{in} \: \omega 
        \end{aligned}
        \right.
    \end{equation}
    if and only if
    \begin{equation} \label{eq:uc2}
        \left \{
        \begin{aligned}
            F &= 0 \quad \textnormal{in} \: \omega \\
            \M_{\lambda}(F,\omega) &= 0.
        \end{aligned}
        \right.
    \end{equation}
\end{theorem}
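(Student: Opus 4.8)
The plan is to argue connected component by connected component of $\overline{(0,\pi)\setminus\omega}$, exploiting that on each such interval the homogeneous equation $\mathfrak{L} w - \lambda w = 0$ has a two-dimensional solution space spanned by the global solutions $\phi_{\lambda}$ and $\Tilde{\phi}_{\lambda}$, and that the Sturm--Liouville structure of $\mathfrak{L}$ provides a Lagrange (Green) identity. For $u,w$ regular enough and any component $C=[a,b]$ one has
\[
\int_C \big((\mathfrak{L} u - \lambda u)w - u(\mathfrak{L} w - \lambda w)\big) = \big[-\gamma(u'w - uw')\big]_a^b,
\]
the $\gamma_0$-terms cancelling. Since the natural quantity controlled by $u\in\mathcal{D}(\mathfrak{L})$ is the flux $\gamma u'$ (which lies in $H^1\subset C^0$, because $\gamma$ is only $L^\infty$), I would rewrite the boundary term using $\gamma u'$, $u$, $w$ and $\gamma w'$ at the endpoints.

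For the necessity direction, suppose $u$ solves \eqref{eq:uc1}. Since $u\equiv 0$ on the open set $\omega$, all its derivatives vanish there, so $F = \mathfrak{L} u - \lambda u = 0$ on $\omega$. Next, fix a component $C$ and take $w\in\{\phi_{\lambda},\Tilde{\phi}_{\lambda}\}$ in the identity above, so that its left-hand side reduces to $\int_C F w$. At an endpoint that is an interior interface with $\omega$, continuity of $u$ gives $u=0$, and since $u$ (hence $\gamma u'$) vanishes on the $\omega$-side, continuity of the flux gives $\gamma u' = 0$; thus the whole boundary contribution at interface endpoints disappears. At a true endpoint $0$ or $\pi$ the Dirichlet condition gives $u=0$, killing the $uw'$ term, and choosing $w=\phi_{\lambda}$ (which also vanishes at $0,\pi$) kills the remaining $(\gamma u')w$ term as well. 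This yields $\int_C F\phi_{\lambda} = 0$ for every component and $\int_C F\Tilde{\phi}_{\lambda} = 0$ for every interior component, i.e. exactly $\M_{\lambda}(F,\omega)=0$.

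For the sufficiency direction, assuming \eqref{eq:uc2}, I would build $u$ by setting $u\equiv 0$ on $\omega$ and solving $\mathfrak{L} u - \lambda u = F$ on each component $C$, imposing the matching data $u=0$ and $\gamma u' = 0$ at interior interface endpoints and only the Dirichlet condition $u=0$ at a true endpoint. Writing the solution of the initial value problem started at an interface endpoint by variation of parameters, $u(x)=\tfrac{1}{W}\int_a^x\big(\phi_{\lambda}(x)\Tilde{\phi}_{\lambda}(t)-\Tilde{\phi}_{\lambda}(x)\phi_{\lambda}(t)\big)F(t)\,dt$ with $W=\gamma(\phi_{\lambda}\Tilde{\phi}_{\lambda}' - \phi_{\lambda}'\Tilde{\phi}_{\lambda})$ the constant nonzero Wronskian, I would evaluate $u$ and $\gamma u'$ at the opposite endpoint. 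A short computation using the constancy of $W$ shows that the terminal conditions hold precisely when $\int_C F\phi_{\lambda}=\int_C F\Tilde{\phi}_{\lambda}=0$ for an interior component, and (after using the one free multiple of $\phi_{\lambda}$ allowed by the condition $u=0$ at $0$ or $\pi$) precisely when $\int_C F\phi_{\lambda}=0$ for a boundary component; that is, exactly under $\M_{\lambda}(F,\omega)=0$. Finally I would check admissibility of the glued $u$: continuity of $u$ across interfaces gives $u\in H_0^1(0,\pi)$, and continuity of the flux $\gamma u'$ (all interface values being $0$) gives $\gamma u'\in H^1(0,\pi)$, hence $\mathfrak{L} u = -\partial_x(\gamma u')+\gamma_0 u\in L^2$ and $u\in\mathcal{D}(\mathfrak{L})$.

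The main obstacle I expect is the sufficiency construction rather than the necessity: one must see that the apparently overdetermined endpoint data (two conditions $u=\gamma u'=0$ at each interior interface) is made consistent by exactly the prescribed moments, and that the single free parameter surviving at a true boundary endpoint absorbs the missing second condition there. Keeping all boundary manipulations in terms of the flux $\gamma u'$, never $u'$ alone, is what lets the argument survive merely $L^\infty$ coefficients, and I would also note that degenerate (single-point) components contribute nothing, since their moments vanish automatically.
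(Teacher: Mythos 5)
Your necessity argument coincides with the paper's: the Lagrange identity on each connected component of $\overline{(0,\pi)\setminus\omega}$ with $w=\phi_{\lambda}$ (and with the second homogeneous solution on interior components), combined with the vanishing of $u$ and of the flux $\gamma u'$ at interface endpoints and of $u$, $\phi_{\lambda}$ at $0,\pi$. Where you genuinely diverge is the sufficiency direction, and your route is correct. The paper works globally: summing the component moments gives $\int_0^{\pi}F\phi_{\lambda}=0$, hence by the Fredholm alternative a global solution $u_0$ of $\mathfrak{L}u_0-\lambda u_0=F$; it then adjusts by $\mu\phi_{\lambda}$ to force $u=\gamma u'=0$ at one point $x_0\in\overline{\omega}$ and propagates the vanishing to all of $\omega$ by contradiction, using the moments of the components lying between $x_0$ and a hypothetical $x_1$ with $u(x_1)\neq 0$ together with the invertibility of the Wronskian matrix. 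You instead build $u$ piecewise by variation of parameters on each component and glue, checking that the overdetermined interface data $u=\gamma u'=0$ is attainable exactly when the corresponding moments vanish (for a boundary component this uses that the free multiple of $\phi_{\lambda}$ moves the endpoint data along the direction $(\phi_{\lambda}(\beta),(\gamma\phi_{\lambda}')(\beta))\neq(0,0)$, to which the moment condition constrains that data to be parallel), and that the glued function has $\gamma u'\in H^1$ and hence lies in $\mathcal{D}(\mathfrak{L})$. Your construction is more elementary and makes the equivalence transparent component by component, at the price of the gluing/regularity verification; the paper's version avoids the gluing but needs the global Fredholm step and the propagation argument. A further point in your favour: you consistently use the second homogeneous solution $\Tilde{\phi}_{\lambda}$ in the Wronskian and variation-of-parameters computations, which is the object actually appearing in the definition of $\M_{\lambda}$, whereas the paper's written proof slips into using the generalized eigenfunction $\psi_{\lambda}$ there, which solves an inhomogeneous equation and is not a second solution of $\mathfrak{L}v=\lambda v$.
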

\begin{proof}
    We first make a preliminary computation. Let $(\alpha, \beta) \subseteq (0,\pi)$, $u \in \mathcal{D}(\mathfrak{L})$ a solution of 
    $$
    \mathfrak{L}u - \lambda u = F.
    $$
    Then, let $v \in L^2(0,\pi)$ such that $\mathfrak{L} v - \lambda v = 0$; obviously:
    \begin{equation} \label{eq:ipp}
        \displaystyle \int_{\alpha}^{\beta} Fv = - \left[ (\gamma u')(\beta)v(\beta) - u(\beta)(\gamma u')(\beta) \right] + \left[ (\gamma u') (\alpha)v(\alpha) - u(\alpha)(\gamma v')(\alpha)\right]
    \end{equation}
($\Rightarrow$) Suppose that there exists $u$ such that \eqref{eq:uc1} is satisfied. \\
Since $u=0$ in $\omega$, it is clear from the equation that $F=0$ in $\omega$, and by an argument of continuity, we also have that $u = \gamma u' = 0$ in $\omega$. \\
Now consider $C=(\alpha, \beta)$ a connected component of $\overline{(0,\pi) \setminus \omega}$. Necessarily, $\alpha \in \{0, \pi\}$ or $\alpha \in \overline{\omega}$ (and the same goes for $\beta$). We observe that
$$
\left \{
\begin{aligned}
    \alpha \in \{0,\pi\} &\Rightarrow u(\alpha) = 0 \textnormal{ and } \phi_{\lambda}(\alpha)=0 \\
    \alpha \in \overline{\omega} &\Rightarrow u(\alpha) = 0 \textnormal{ and } (\gamma u')(\alpha) = 0.
\end{aligned}
\right.
$$
In both cases, we have $u(\alpha) = 0$ and $\phi_{\lambda}(\alpha) (\gamma u')(\alpha) = 0$, and the same goes for $\beta$. We deduce from \eqref{eq:ipp} that 
$$
\displaystyle \int_C F \phi_{\lambda} = 0.
$$
Now suppose that $C \cap \{0, \pi\} = \emptyset$. In that case, $u(\alpha)=u(\beta)=(\gamma u')(\alpha)=(\gamma u')(\beta)=0$. Hence
$$
\displaystyle \int_C F \psi_{\lambda} =0.
$$
Then we have shown in every case that $\M_{\lambda} (F, \omega) = 0$. \\
$(\Leftarrow)$ Suppose that $\M_{\lambda} (F, \omega) = 0$. We can sum the integrals and get $\displaystyle \int_{\overline{(0,\pi) \setminus \omega}} F \phi_{\lambda} = 0$. Moreover, since $F=0$ in $\omega$, we also have $\displaystyle \int_{(0, \pi)} F \phi_{\lambda} = 0$. This orthogonality condition implies the existence of at least one solution $u_0$ to the equation 
$$
\mathfrak{L}u_0 - \lambda u_0 = F.
$$
We even have that every solution $u$ of this problem can be written under the form $u = u_0 + \mu \phi_{\lambda}$. We show in the rest of the proof that we can choose $\mu \in \R$ such that $u = 0$ in $\omega$.
First, we show that $\mu \in \R$ can be chosen in such a way that there exists a point $x_0 \in \overline{\omega}$ such that
$$
u(x_0)=(\gamma u')=0.
$$
Assume that $\overline{\omega} \cap \{0,\pi\}$, for example $0 \in \overline{\omega}$. We already have $u(0)=0$, so we only need to impose $(\gamma u')(0)=0$, that is
$$
(\gamma u_0 ')(0) + \mu (\gamma \phi_{\lambda}')(0) = 0.
$$
Since $(\gamma \phi_{\lambda} ')(0) \neq 0$, we can take such a $\mu$. \\
Now assume that $\overline{\omega} \cap (0, \pi) = \emptyset$. We denote $[0,\beta]$ the connected component of $\overline{(0, \pi) \setminus \omega}$ that contains $0$. We know
$$
\displaystyle \int_0^{\beta} F \phi_{\lambda} = 0.
$$
Since $F=0$ in $\omega$, $\beta$ can be replaced by $\beta + \delta$ with $\delta$ small enough so that $(\beta, \beta + \delta) \subseteq \omega$, with $\phi_{\lambda} (\beta + \delta) \neq 0$. Then we can set $\mu$ such that 
$$
u(\beta + \delta) = u_0(\beta + \delta) + \mu \phi_{\lambda} (\beta + \delta) = 0.
$$
Then we have 
$$
0 = \displaystyle \int_0^{\beta + \delta} F \phi_{\lambda} = -[(\gamma u')(\beta + \delta)\phi_{\lambda}(\beta + \delta) - u(\beta + \delta) (\gamma \phi_{\lambda}')(\beta + \delta)] + \underbrace{[(\gamma u')(0)\phi_{\lambda}(0) - u(0) (\gamma \phi_{\lambda}')(0)]}_{=0}.
$$
Since $u(\beta + \delta) = 0$ and $\phi_{\lambda}(\beta + \delta) \neq 0$, we get
$$
(\gamma u')(\beta + \delta)=0.
$$
Hence, $u$ and $\gamma u'$ vanish at the same point $x_0 = \beta + \delta$ in $\omega$. \\
Now, we want to show that $u = 0$ in $\omega$. Assume that there exists $x_1 \in \omega$ such that $u(x_1) \neq 0$. Without loss of generality, assume that $x_0 < x_1$. Then $[x_0, x_1] \cap \overline{(0, \pi) \setminus \omega}$ is a reunion of connected components of $\overline{(0, \pi) \setminus \omega}$, and none of those components touch $\{0, \pi\}$. \\
Since $F=0$ in $\omega$, and $\M_{\lambda}(F,\omega) = 0$, we get
$$
\displaystyle \int_{x_0}^{x_1} F \phi_{\lambda} = \displaystyle \int_{x_0}^{x_1} F \psi_{\lambda} = 0.
$$
Then we get the system
\begin{equation} \label{eq:syswro}
    \left \{
    \begin{aligned}
        0 &= -(\gamma u')(x_1) \phi_{\lambda} (x_1) + u(x_1)(\gamma \phi_{\lambda}')(x_1) \\
        0 &= -(\gamma u')(x_1) \psi_{\lambda} (x_1) + u(x_1)(\gamma \psi_{\lambda}')(x_1).
    \end{aligned}
    \right.
\end{equation}
Since $\phi_{\lambda}$ and $\psi_{\lambda}$ are two independant solutions of the same ODE of order two, the wronskian matrix
$$
\begin{pmatrix}
    \phi_{\lambda}(x_1) & -(\gamma \phi_{\lambda}')(x_1) \\
    \psi_{\lambda}(x_1) & -(\gamma \psi_{\lambda}')(x_1)
\end{pmatrix}
$$
is invertible. Consequently, \eqref{eq:syswro} has only one solution possible, that is
$$
u(x_1) = (\gamma u')(x_1) = 0.
$$
This is a contradiction, so $u=0$ in $\omega$, and this ends the proof.
\end{proof}

We will also use the following result about the existence and properties of biorthogonal families to a family of exponential functions, proved in \cite{coursboyer} (Theorem V.4.16).

\begin{theorem} \label{th:biortho}
    Let $\tau >0$. The family $\{e_{0,\lambda}: t \mapsto e^{-\lambda t}, e_{1,\lambda}: t \mapsto te^{-\lambda t}\}_{\lambda \in \Lambda}$ admits a biorthogonal family $\{r_{0,\lambda}, r_{1,\lambda}\}_{\lambda \in \Lambda}$ of functions in $L^2(0,\tau)$, that is
    \begin{equation} \label{eq:biortho}
        \displaystyle \int_0^\tau e_{i,\lambda}(t) r_{j,\mu}(t) dt = \delta_{(i,\lambda),(j,\mu)}. 
    \end{equation}
    Moreover, we have the following estimate
    $$
    \Vert r_{i,\lambda} \Vert_{L^2(0,\tau)} \leq K e^{\tau \frac{\lambda}{2} + K \sqrt{\lambda} + \frac{K}{\tau}}, \: \forall \lambda \in \Lambda, \: \forall i \in \{0,1\}.
    $$
    where $K>0$ does not depend on $\lambda$.
\end{theorem}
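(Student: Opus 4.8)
Since the statement is quoted from \cite{coursboyer}, the cleanest route in practice is to verify that our eigenvalues fall under the hypotheses of that construction; but let me sketch how the construction itself goes. The plan is to pass through the Fourier--Laplace transform and reduce the biorthogonality relations \eqref{eq:biortho} to a Hermite-type interpolation problem in a Paley--Wiener space.

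First I would recenter the interval. Writing $t = s + \tau/2$, the relation $\int_0^\tau e^{-\lambda t} r(t)\,dt$ becomes $e^{-\lambda\tau/2}\int_{-\tau/2}^{\tau/2} e^{-\lambda s}\tilde r(s)\,ds$, so biorthogonality on $(0,\tau)$ is equivalent to prescribing, on the symmetric interval $(-\tau/2,\tau/2)$, the values of the entire function $\widehat{\tilde r}(z) = \int_{-\tau/2}^{\tau/2} e^{-zs}\tilde r(s)\,ds$ and of its derivative at each node $z = \lambda$. Precisely, $r_{0,\mu}$ corresponds to $\widehat{\tilde r}(\lambda) = e^{\lambda\tau/2}\delta_{\lambda\mu}$ and $\widehat{\tilde r}{}'(\lambda) = 0$, while $r_{1,\mu}$ corresponds to $\widehat{\tilde r}(\lambda) = 0$ and $\widehat{\tilde r}{}'(\lambda) = -e^{\lambda\tau/2}\delta_{\lambda\mu}$, up to harmless corrections from the shift in the $t$-weight of $e_{1,\lambda}$. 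By the Paley--Wiener theorem, $\tilde r \in L^2(-\tau/2,\tau/2)$ corresponds exactly to $\widehat{\tilde r}$ entire of exponential type $\le \tau/2$ with $L^2$ trace on the imaginary axis, and by Plancherel $\|r\|_{L^2}$ is comparable to that trace norm. This already reveals where the factor $e^{\tau\lambda/2}$ comes from: it is the prescribed value itself.

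Next I would build the generating function. The spectral asymptotics $\lambda_n \sim c\,n^2$ (whence $\sum_{\lambda} 1/\lambda < \infty$ and the $\sqrt{\lambda_n}$ are uniformly separated) let me define the even canonical product $\Psi(z) = \prod_{\lambda\in\Lambda}(1 - z^2/\lambda)$, entire of order one and finite exponential type, with simple zeros at $\pm\sqrt\lambda$. Then $z \mapsto \Psi(\sqrt z)$ has simple zeros exactly at the $\lambda$'s, and its square furnishes the double zeros required by the multiplicity-two family. For fixed $\mu$ I would form the interpolant as a quotient of such products, which vanishes to the correct order at every node $\lambda \neq \mu$ and realizes the prescribed value and derivative at $\mu$, multiplied by a Paley--Wiener multiplier $M(z)$ of small exponential type that decays rapidly on the imaginary axis. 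The multiplier must be chosen so that the total exponential type stays $\le \tau/2$ while restoring $L^2$-integrability on the line; this is the classical step that produces the factor $e^{K/\tau}$ as $\tau \to 0$.

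Finally I would estimate $\|r_{i,\lambda}\|_{L^2}$ through the $L^2$ norm of $\widehat{\tilde r}_{i,\lambda}$ on the imaginary axis, using lower bounds for $|\Psi'(\sqrt\lambda)|$ (from the separation of the $\sqrt{\lambda_n}$) together with upper bounds for $\Psi$ and $M$ along the axis. The three contributions then assemble into the stated bound: $e^{\tau\lambda/2}$ from the recentred target value, $e^{K\sqrt\lambda}$ from evaluating the order-one product at the node $\sqrt\lambda$, and $e^{K/\tau}$ from the multiplier. I expect the genuine obstacle to be precisely this last step: constructing a multiplier that simultaneously respects the type budget $\le\tau/2$, yields summable $L^2$ decay on vertical lines, and gives the correct $1/\tau$ dependence, together with the uniform-in-$\lambda$ lower bounds on the canonical product at its zeros. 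As this is exactly the content of Theorem V.4.16 in \cite{coursboyer}, I would ultimately invoke that result after checking that $\Lambda$ satisfies its hypotheses, namely $\sum_{\lambda}1/\lambda < \infty$ and the counting/gap estimate inherited from $\lambda_n \sim c\,n^2$.
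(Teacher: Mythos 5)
The paper does not actually prove this statement: it is imported verbatim as Theorem V.4.16 of \cite{coursboyer}, with no argument given. Your proposal therefore does strictly more than the paper. Your sketch is a faithful outline of the classical Fattorini--Russell construction that underlies the cited result: recentring the interval to exhibit the factor $e^{\tau\lambda/2}$, Paley--Wiener correspondence between $L^2(-\tau/2,\tau/2)$ and entire functions of exponential type $\tau/2$ with $L^2$ trace on a vertical line, a canonical product with zeros at the $\sqrt{\lambda}$ (squared to accommodate the derivative conditions coming from $te^{-\lambda t}$) contributing $e^{K\sqrt{\lambda}}$, and a type-preserving multiplier responsible for the $e^{K/\tau}$ blow-up as $\tau\to 0$. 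You also correctly identify the multiplier construction and the uniform lower bound on the product at its own zeros as the genuinely delicate steps, and you end by invoking the same citation the paper relies on. Two small cautions if you were to make the sketch rigorous. First, the gap property $\inf_n(\sqrt{\lambda_{n+1}}-\sqrt{\lambda_n})>0$ does not formally follow from the asymptotics $\lambda_n\sim cn^2$ alone; for the Sturm--Liouville operator $\mathfrak{L}$ of this paper one needs the sharper statement $\sqrt{\lambda_{n+1}}-\sqrt{\lambda_n}\to\sqrt{c}>0$ (or a direct gap estimate), which is what \cite{coursboyer} actually establishes for such operators. Second, the derivative-interpolation conditions for $r_{1,\lambda}$ pick up extra terms from the recentring $t=s+\tau/2$ (since $e_{1,\lambda}(t)=te^{-\lambda t}$ becomes a combination of $e^{-\lambda s}$ and $se^{-\lambda s}$); you wave at this as ``harmless corrections,'' which is true but should be written out, as it mixes the $i=0$ and $i=1$ target values. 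Neither point invalidates the approach; both are handled in the cited reference.
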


\section{Approximate controllability}
\subsection{Already existing results with no order one coupling term}
In \cite{boyer}, the authors have already proved some results when there is no order one term, that is $p=0$. Since we will use similar ideas to treat our case with this term, let us introduce those results.
\vspace{1em}

First, let us rewrite the problem we are interested in in this section:

\begin{equation} \label{eq:noP}
\left \{
\begin{aligned}
    \partial_{t} y + \L y &= \begin{pmatrix} 0 & 0 \\ -q(x) & 0 \end{pmatrix}y + \mathbb{1}_{\omega} \begin{pmatrix}
        v \\ 0
    \end{pmatrix} \\
    y(0,\cdot) = y(\pi,\cdot) &= 0 \\
    y(\cdot, 0) &= y^{0}
\end{aligned}
\right.
\end{equation}

We have the following characterization of its approximate controllability.

\begin{theorem} \label{th:ACnoP}
    Assume that $\supp q \cap \omega = \emptyset$. Then system \eqref{eq:noP} is approximately controllable if and only if
    $$
    \forall \lambda \in \Lambda, \: \textnormal{rank} \: \M_{\lambda}(-q \phi_{\lambda}, \omega) = 1.
    $$
\end{theorem}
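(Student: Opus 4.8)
The plan is to run the Fattorini--Hautus test of Theorem \ref{th:fh}. Here the observation operator is $B^* u = u_1$, so the test asks whether the only $u=(u_1,u_2)\in\mathcal{D}(\A^*)$ solving $(\A^*-s)u=0$ with $u_1=0$ on $\omega$ is $u=0$. By Proposition \ref{prop:spec} the spectrum of $\A^*$ is exactly $\Lambda$, so I only need to examine $s=\lambda\in\Lambda$ and the eigenfunctions of $\A^*$. The whole argument therefore reduces to deciding, eigenvalue by eigenvalue, whether some nonzero eigenfunction is invisible to the observation $u\mapsto u_1|_\omega$.

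First I would dispose of the simple eigenvalues. If $I_\lambda(0,q)\neq 0$ then, by Proposition \ref{prop:spec}, the eigenspace is the line spanned by $\Phi_\lambda=(\phi_\lambda,0)$, so any candidate is $u=d\,\Phi_\lambda$ and the condition $u_1=d\phi_\lambda=0$ on $\omega$ forces $d=0$: indeed $\phi_\lambda$ solves the second order ODE $\mathfrak{L}\phi_\lambda=\lambda\phi_\lambda$ and cannot vanish on a nonempty open set without vanishing identically. Hence the test is automatically satisfied at simple eigenvalues, and no condition is needed there.

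The content lies at the double eigenvalues, where $I_\lambda(0,q)=0$. There the eigenspace is spanned by $\Phi_\lambda=(\phi_\lambda,0)$ and $\Psi_\lambda=(\psi_\lambda,\phi_\lambda)$, so a violating eigenfunction has the form $u=(d\phi_\lambda+e\psi_\lambda,\,e\phi_\lambda)$ with $(d,e)\neq 0$, subject to $u_1=d\phi_\lambda+e\psi_\lambda=0$ on $\omega$. Since $\phi_\lambda$ cannot vanish on $\omega$ we must have $e\neq 0$, and then $w:=d\phi_\lambda+e\psi_\lambda$ solves $\mathfrak{L}w-\lambda w=-e\,q\phi_\lambda$ (using $I_\lambda=0$ and $p=0$) and vanishes on $\omega$. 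At this point I would apply the unique continuation property of Theorem \ref{th:uc} with $F=-q\phi_\lambda$: the hypothesis $\supp q\cap\omega=\emptyset$ gives $F=0$ on $\omega$ for free, so such a $w$ exists if and only if $\M_\lambda(-q\phi_\lambda,\omega)=0$. Reading this through the test, the system fails to be approximately controllable precisely when some $\lambda$ has $\M_\lambda(-q\phi_\lambda,\omega)=0$, i.e. $\operatorname{rank}\M_\lambda(-q\phi_\lambda,\omega)=0$; otherwise it is approximately controllable.

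The delicate point, and where I would be most careful, is the exact value of the rank. The argument above yields the clean criterion that $\M_\lambda(-q\phi_\lambda,\omega)\neq 0$, that is $\operatorname{rank}\geq 1$, for every $\lambda$, and this is what I would actually prove; note it is automatic at simple eigenvalues, since there the first components already sum to $\sum_C\int_C(-q\phi_\lambda)\phi_\lambda=\int_0^\pi(-q)\phi_\lambda^2=-I_\lambda(0,q)\neq 0$, so the genuine constraints appear only at the double eigenvalues where $I_\lambda=0$. Matching this with the stated equality $\operatorname{rank}=1$ would additionally require $\operatorname{rank}\leq 1$ in all cases; I would check this claim with some care, because the vectors attached to boundary components lie on the first axis while those attached to interior components need not, so a priori the family can span $\R^2$. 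My expectation is therefore that the natural and provable statement is the one with $\operatorname{rank}\geq 1$ (equivalently $\M_\lambda(-q\phi_\lambda,\omega)\neq 0$), which I take to be the intended meaning of the theorem.
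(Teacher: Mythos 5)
Your proof is correct and follows essentially the same route as the paper: the Fattorini--Hautus test reduces the question to each $\lambda\in\Lambda$, and Theorem \ref{th:uc} applied to $\mathfrak{L}u_1-\lambda u_1=-\delta q\phi_{\lambda}$ (the paper parametrizes the eigenspace uniformly by $\delta$ rather than splitting into simple and double eigenvalues, but the content is identical) yields the criterion $\M_{\lambda}(-q\phi_{\lambda},\omega)\neq 0$. Your concern about the rank possibly exceeding $1$ is resolved by the remark following the theorem: the rank is that of the single element $\M_{\lambda}(-q\phi_{\lambda},\omega)$ of the vector space $(\R^{2})^{\mathcal{C}\left(\overline{(0,\pi)\setminus\omega}\right)}$, so rank $=1$ means precisely $\M_{\lambda}(-q\phi_{\lambda},\omega)\neq 0$, which is exactly what you prove.
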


\begin{remark}
    In this formula, the rank condition is understood in the vector space $(\R^{2})^{\mathcal{C} \left( \overline{(0,\pi) \setminus \omega}\right)}$.
\end{remark}
\begin{proof}
    We use the criterion given by Theorem \ref{th:fh}. Hence, the system is approximately controllable if and only if
    $$
    \forall s \in \C, \forall u \in \mathcal{D}(\A^*), \:
    \left.
        \begin{aligned}
             (\A^*-s) u &=0 &\textnormal{in} \: (0, \pi)  \\
             B^{*} u &=0 &\textnormal{in} \: \omega
        \end{aligned}
    \right \} \Rightarrow u = 0.
    $$
    The only non-trivial case is the one where $s=\lambda$ where $\lambda \in \Lambda$ is an eigenvalue of $\A^*$. Then $u$ is a solution of $\L u-A(x)^*u=\lambda u$, so it can be written
    $$
    u = \begin{pmatrix}
        u_{1} \\
        \delta \phi_{\lambda}
    \end{pmatrix}
    $$
    with $\delta \in \R$ and $u_1 \in \mathcal{D}(\mathfrak{L})$ such that
    $$
    \mathfrak{L} u_1 - \lambda u_1 = -\delta q \phi_{\lambda}.
    $$
    Applying Theorem \ref{th:uc} with $F=-\delta q \phi_{\lambda}$, and since we assumed $q \mathbb{1}_{\omega} = 0$, such a solution $u$ exists and satisfies $u_1 = 0$ in $\omega$ if and only if
    \begin{equation} \label{rang}
        \M_{\lambda}(-\delta q \phi_{\lambda}, \omega) = 0.
    \end{equation}
    Moreover, $u=0$ if and only if $\delta = 0$ and $u_1 = 0$ in $\omega$. This follows from a unique continuation property on a simple parabolic equation. \\
    Finally, the Fattorini-Hautus test is satisfied if and only if \eqref{rang} implies $\delta =0$. Note that 
    $$
    \M_{\lambda}(-\delta q \phi_{\lambda}, \omega) = \delta \M_{\lambda}(-q\phi_{\lambda}, \omega).
    $$
    Then, the theorem is proved, for \eqref{rang} implies $\delta =0$ if and only if $\M_{\lambda}(-q\phi_{\lambda}, \omega) \neq 0$, that is exactly rank $\M_{\lambda}(-q\phi_{\lambda}, \omega) = 1$.
\end{proof}

\subsection{Back to our system}

In the first section we claimed that Theorem \ref{th:1} from \cite{duprez} cannot be true for we can find cases where the hypotheses are satisfied and yet system \eqref{eq:D} is not approximately controllable. In this part, we state a slightly different theorem about the approximate controllability, and exhibit a particular example where Theorem \ref{th:1} fails. 
\subsubsection{A necessary and sufficient condition}
We denote $\Tilde{\Lambda} = \{ \lambda \in \Lambda$, $-q \phi_{\lambda} + \partial_{x}(p \phi_{\lambda}) = 0$ in $\omega \}$.

\begin{theorem} \label{th:AC}
    System \eqref{eq:D} is approximately controllable if and only if 
    \begin{equation} \label{eq:AC}
        \forall \lambda \in \Tilde{\Lambda}, \:
        \textnormal{rank} \: \M_{\lambda}(-q\phi_{\lambda} + \partial_{x}(p\phi_{\lambda}), \omega) = 1.
    \end{equation}
\end{theorem}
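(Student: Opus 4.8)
The plan is to run the Fattorini--Hautus test (Theorem \ref{th:fh}) and follow the argument of Theorem \ref{th:ACnoP} line by line, the only---but decisive---difference being that the source term now carries the order-one coupling and that the clause ``$F=0$ in $\omega$'' is no longer automatic. Since by Proposition \ref{prop:spec} the spectrum of $\A^*$ is exactly $\Lambda$, the only non-trivial instance of the test is $s=\lambda\in\Lambda$. For such $s$, writing $u=\begin{pmatrix} u_1 \\ u_2\end{pmatrix}\in\mathcal{D}(\A^*)$, the second line of $(\A^*-\lambda)u=0$ reads $\mathfrak{L}u_2=\lambda u_2$, so by geometric simplicity of the eigenvalues of $\mathfrak{L}$ we have $u_2=\delta\phi_{\lambda}$ for some $\delta\in\R$, and the first line becomes
\[
\mathfrak{L}u_1-\lambda u_1 = \delta\bigl(-q\phi_{\lambda}+\partial_x(p\phi_{\lambda})\bigr)=:F .
\]
I would first record that $F\in L^2(0,\pi)$, since $q\in L^{\infty}$, $p\in W^{1,\infty}$ and $\phi_{\lambda}\in H_0^1$. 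As $B^*u=u_1$, the constraint $B^*u=0$ in $\omega$ is simply $u_1=0$ in $\omega$, so the whole test reduces to determining for which $\lambda$ and $\delta$ the scalar problem $\mathfrak{L}u_1-\lambda u_1=F$ admits a solution vanishing on $\omega$.

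At this point I would invoke Theorem \ref{th:uc}: such a $u_1$ exists if and only if $F=0$ in $\omega$ and $\M_{\lambda}(F,\omega)=0$. The case $\delta=0$ is disposed of as in Theorem \ref{th:ACnoP}: then $F=0$, $u_1$ solves $\mathfrak{L}u_1-\lambda u_1=0$ and vanishes on the nonempty open set $\omega$, forcing $u_1=0$ because a nonzero multiple of $\phi_{\lambda}$ cannot vanish on an open interval, so $u=0$ and no counterexample arises. For $\delta\neq0$, the requirement $F=0$ in $\omega$ is equivalent to $-q\phi_{\lambda}+\partial_x(p\phi_{\lambda})=0$ in $\omega$, i.e. to $\lambda\in\Tilde{\Lambda}$; hence every $\lambda\notin\Tilde{\Lambda}$ is harmless. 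For $\lambda\in\Tilde{\Lambda}$, the remaining requirement $\M_{\lambda}(F,\omega)=0$ becomes, by linearity of $M_{\lambda}(\cdot,C)$ in its first argument, $\delta\,\M_{\lambda}(-q\phi_{\lambda}+\partial_x(p\phi_{\lambda}),\omega)=0$.

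Collecting the cases, a nonzero $u$ defeating the test exists if and only if there is some $\lambda\in\Tilde{\Lambda}$ with $\M_{\lambda}(-q\phi_{\lambda}+\partial_x(p\phi_{\lambda}),\omega)=0$, i.e. of rank $0$. Negating, the Fattorini--Hautus criterion holds---equivalently, system \eqref{eq:D} is approximately controllable---precisely when $\M_{\lambda}(-q\phi_{\lambda}+\partial_x(p\phi_{\lambda}),\omega)$ is nonzero, that is of rank $1$, for every $\lambda\in\Tilde{\Lambda}$, which is exactly \eqref{eq:AC}.

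The step I expect to be delicate is the correct bookkeeping of the two conditions produced by Theorem \ref{th:uc}. In the $p=0$ setting of Theorem \ref{th:ACnoP} the support hypothesis rendered ``$F=0$ in $\omega$'' automatic, so all the content sat in a rank condition over the whole of $\Lambda$; here it is precisely this first clause that singles out the subfamily $\Tilde{\Lambda}$ and explains why the rank test need only be imposed there. Care is also required to confirm that $F\in L^2(0,\pi)$ so that Theorem \ref{th:uc} genuinely applies, and that factoring the scalar $\delta$ out of $\M_{\lambda}$ is legitimate; both are routine but must be made explicit.
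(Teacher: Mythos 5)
Your argument is correct and follows essentially the same route as the paper's proof: the Fattorini--Hautus test reduces matters to the eigenvalue equation for $\A^*$, and Theorem \ref{th:uc} converts the existence of a solution vanishing on $\omega$ into the two clauses ``$F=0$ in $\omega$'' (which singles out $\Tilde{\Lambda}$) and the rank condition on $\M_{\lambda}$. If anything, you are more explicit than the paper about the decomposition $u=(u_1,\delta\phi_{\lambda})$, the harmless case $\delta=0$, and the factoring of $\delta$ out of $\M_{\lambda}$, all of which the paper leaves implicit.
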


\begin{proof}
    As for the previous theorem, we use the criterion of Fattorini-Hautus (Theorem \ref{th:fh}) and the unique continuation property (Theorem \ref{th:uc}) to prove this result. Indeed, from Theorem \ref{th:fh}, we know that system \eqref{eq:D} is approximately controllable if and only if for any $s\in \C$ and any $u \in \mathcal{D}(\A^*)$
    \eqref{eq:fh} is satisfied. This is equivalent to the following condition
    $$
    \forall \lambda \in \Lambda, \: \left \{ 
    \begin{aligned}
        \mathfrak{L} u - \lambda u &= -q\phi_{\lambda} + \partial_x (p\phi_{\lambda}) \\
        u &= 0 \textnormal{ in } \omega
    \end{aligned}
    \right.
    \textnormal{ has no solution in } \mathcal{D}(\mathfrak{L}).
    $$
    Using Theorem \ref{th:uc} we get that the system is approximately controllable if and only if
    \begin{equation}
        \forall \lambda \in \Lambda, \: \bigg \{
        -q\phi_{\lambda} + \partial_x (p\phi_{\lambda}) \neq 0 \textnormal{ in } \omega \textnormal{, or } \M_{\lambda}(-q\phi_{\lambda} + \partial_{x}(p\phi_{\lambda}), \omega) \neq 0 \bigg \}.
    \end{equation}
    It is clear that \eqref{eq:AC} is equivalent to this condition since for any $\lambda \in \Lambda$    
    $$
    \M_{\lambda}(-q\phi_{\lambda} + \partial_{x}(p\phi_{\lambda}), \omega) \neq 0 \Leftrightarrow \textnormal{rank} \: \M_{\lambda}(-q\phi_{\lambda} + \partial_{x}(p\phi_{\lambda}), \omega) = 1.
    $$
\end{proof}
\begin{remark}
    In the case where the control domain is an interval $\omega = (a,b)$, this rank condition simply rewrites as a condition on the two following integrals:
    $$
    \left \vert \displaystyle \int_0^a \left(q - \frac{\partial_{x}p}{2} \right) \phi_{\lambda}^2 \right \vert  + \left \vert \displaystyle \int_0^\pi \left(q - \frac{\partial_{x}p}{2} \right) \phi_{\lambda}^2 \right \vert \neq 0.
    $$
    This case is detailed in \cite{duprez} (see Theorem 1.2).
\end{remark}

\begin{example}
    Let us explicit some simple cases of this theorem: 
    \begin{itemize}
        \item If $p=0$, we have $\Tilde{\Lambda} = \{ \lambda \in \Lambda$, $q \phi_{\lambda}  = 0$ in $\omega \}$. 
            \begin{itemize}
                \item If $\supp q \cap \omega \neq \emptyset$ then $\Tilde{\Lambda} = \emptyset$, so there is nothing to check in \eqref{eq:AC}, and the system is approximately controllable.
                \item If $\supp q \cap \omega = \emptyset$ then $\Tilde{\Lambda} = \Lambda$, so condition \eqref{eq:AC} becomes 
                \begin{equation}
                    \forall \lambda \in \Lambda , \: \textnormal{rank} \: \M_{\lambda}(q\phi_{\lambda} , \omega) = 1.
                \end{equation}
                Then we retrieve the well known case with only an order zero coupling term given in Theorem \ref{th:ACnoP}.
            \end{itemize}
        \item If $q=0$ we have $\Tilde{\Lambda} = \{ \lambda \in \Lambda$, $\partial_{x}(p\phi_{\lambda}) = 0$ in $\omega \}$. 
        \begin{itemize}
            \item If $\supp p \cap \omega = \emptyset$ then $\Tilde{\Lambda} = \Lambda$, and the system is approximately controllable if and only if 
            \begin{equation}
                \forall \lambda \in \Lambda , \: \textnormal{rank} \: \M_{\lambda}(\partial_x (p \phi_{\lambda}) , \omega) = 1.
            \end{equation}
            \item If $\supp p \cap \omega \neq \emptyset$; we treat two cases:
            \begin{itemize}
                \item In the particular case where $\omega$ is an interval $(a,b)$, if there exists $x_0 \in \overline{\omega}$ such that $p(x_0)=0$, then we have approximate controllability for the system. Indeed, with the same arguments as previously (Fattorini-Hautus criterion and unique continuation), the system is not approximately controllable if and only if
                $$
                \left \{
                \begin{aligned}
                    \partial_{x} (p\phi_{\lambda}) &= 0 &\textnormal{in} \: \omega \\
                    \displaystyle \int_{0}^{a}\partial_{x}(p\phi_{\lambda})\phi_{\lambda} &= \displaystyle \int_{b}^{\pi}\partial_{x}(p\phi_{\lambda})\phi_{\lambda} = 0.
                \end{aligned}
                \right.
                $$
                Only using the first condition on $\partial_{x} (p\phi_{\lambda})$ in $\omega$, and since $p\phi_{\lambda}$ is continuous on $\overline{\omega}$, we deduce that $p\phi_{\lambda}$ is constant in $\overline{\omega}$. Hence 
                \begin{equation} \label{eq:cst}
                (p\phi_{\lambda})(x) = (p\phi_{\lambda})(x_0) = 0, \: \forall x \in \overline{\omega} 
                \end{equation}
                since $p(x_0)=0$. However, as $\phi_{\lambda}$ is an eigenfunction of $\A^*$, its zeroes are isolated, so \eqref{eq:cst} implies that $p(x)=0, \: \forall x \in \overline{\omega}$. We assumed $\supp p \cap \omega \neq \emptyset$, so this is a contradiction. \\
                Then the system is necessarily approximately controllable. 
                \item If for all $x \in \overline{\omega}$, $p(x) \neq 0$, that is $\overline{\omega} \subseteq \supp p$,
                then $\#\Tilde{\Lambda} \leq 1$.
                Indeed, suppose that there exist $\lambda_0, \lambda_1 \in \Tilde{\Lambda}$. Then 
                $$
                \left \{
                \begin{aligned}
                    \partial_{x}(p \phi_{\lambda_0}) &= 0 \: \textnormal{in} \: \omega \\
                    \partial_{x}(p \phi_{\lambda_1}) &= 0 \: \textnormal{in} \: \omega.
                \end{aligned}
                \right.
                $$
                that is, in $\omega$
                $$
                \left \{
                \begin{aligned}
                    \phi_{\lambda_0}' &= -\frac{p'}{p} \phi_{\lambda_0} \\
                    \phi_{\lambda_1}' &= -\frac{p'}{p} \phi_{\lambda_1}.
                \end{aligned}
                \right.
                $$
                Then $\phi_{\lambda_0}$ and $\phi_{\lambda_1}$ are two solutions of the same ODE of order one, and we conclude that $\phi_{\lambda_0}$ and $\phi_{\lambda_1}$ are proportional in $\omega$, that is $\lambda_0 = \lambda_1$. Hence $\# \Tilde{\Lambda} \leq 1$.
                \begin{itemize}
                    \item If $\#\Tilde{\Lambda} = 0$, then $\Tilde{\Lambda} = \emptyset$, thus the system is approximately controllable.
                    \item And if $\#\Tilde{\Lambda} = 1$, then the system may not be approximately controllable. We give details on this case in the following section.
                \end{itemize}
            \end{itemize}
        \end{itemize}
    \end{itemize}
\end{example}
\subsubsection{A particular case} \label{counterex}

We will prove the following result, that gives a counter-example to Theorem \ref{th:1}.

\begin{corollary}
    Suppose that $q=0$. There exists a $p \in W^{1, \infty}(0,\pi)$ and a control domain $\omega$ such that $\supp p \cap \omega \neq \emptyset$ and system \eqref{eq:D} is not approximately controllable.
\end{corollary}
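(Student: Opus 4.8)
The plan is to exhibit an explicit system meeting the hypothesis $\supp p \cap \omega \neq \emptyset$ while failing the criterion of Theorem \ref{th:AC}. Since $q=0$, that criterion states that \eqref{eq:D} is approximately controllable if and only if $\operatorname{rank}\M_\lambda(\partial_x(p\phi_\lambda),\omega)=1$ for every $\lambda$ in $\tilde{\Lambda}=\{\lambda\in\Lambda:\partial_x(p\phi_\lambda)=0 \text{ in }\omega\}$. It therefore suffices to engineer a single eigenvalue $\lambda_0$ with $\partial_x(p\phi_{\lambda_0})=0$ in $\omega$ \emph{and} $\M_{\lambda_0}(\partial_x(p\phi_{\lambda_0}),\omega)=0$ (so the rank is $0$, not $1$), while ensuring that no other eigenvalue lies in $\tilde{\Lambda}$.

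For definiteness I take $\mathfrak{L}=-\partial_x^2$ (that is $\gamma\equiv1$, $\gamma_0\equiv0$), so that $\phi_n(x)=\sqrt{2/\pi}\sin(nx)$ and $\lambda_n=n^2$, and I work with the ground state $\phi_1$, which is strictly positive on $(0,\pi)$. Fix a symmetric interval $\omega=(a,\pi-a)$ with $0<a<\pi/2$ and a constant $c\neq0$, and set $p:=c/\phi_1$ on $\omega$. Then $p\phi_1\equiv c$ on $\omega$, hence $\partial_x(p\phi_1)=0$ there and $\lambda_1=1\in\tilde{\Lambda}$. Because $\phi_1>0$ on $\overline\omega$ we have $p\neq0$ on $\overline\omega$, i.e.\ $\overline\omega\subseteq\supp p$; by the dichotomy recorded in the Example this forces $\#\tilde{\Lambda}\le1$, so in fact $\tilde{\Lambda}=\{1\}$. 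Moreover $\omega\subseteq\supp p$ gives $\supp p\cap\omega\neq\emptyset$, as required by the statement.

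It remains to extend $p$ to $[0,a]\cup[\pi-a,\pi]$ so that $p\in W^{1,\infty}(0,\pi)$ and $\M_1(\partial_x(p\phi_1),\omega)=0$. The set $\overline{(0,\pi)\setminus\omega}$ has exactly the two components $[0,a]$ and $[\pi-a,\pi]$, both meeting $\{0,\pi\}$, so by the definition of $\M$ the vanishing of $\M_1$ is just the two scalar conditions $\int_0^a\partial_x(p\phi_1)\phi_1=0$ and $\int_{\pi-a}^\pi\partial_x(p\phi_1)\phi_1=0$. Writing $g:=p\phi_1$ (so $g\equiv c$ on $\omega$) and integrating by parts, the first condition becomes $\int_0^a g(x)\cos x\,dx=c\sin a$; if the extension is chosen symmetric about $\pi/2$ the second condition is identical to the first. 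To keep $p=g/\phi_1$ in $W^{1,\infty}$ up to the endpoints I require $g$ smooth with $g(0)=g(\pi)=0$ (which cancels the vanishing of $\phi_1$ at $0$ and $\pi$) and $g(a)=g(\pi-a)=c$ (which matches $p$ continuously across $\partial\omega$). Thus everything reduces to producing one smooth $g$ on $[0,a]$ with $g(0)=0$, $g(a)=c$ and $\int_0^a g\cos x\,dx=c\sin a$.

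I expect this last construction to be the only real obstacle, though it is elementary. The constant profile $g\equiv c$ already gives $\int_0^a c\cos x\,dx=c\sin a$, so I only need a smooth correction vanishing at $0$, equal to $c$ at $a$, that leaves the weighted integral at its target value. Since $\cos x>0$ on $[0,a]$, I can take a smooth one-parameter family $g_s$ with $g_s(0)=0$ and $g_s(a)=c$ whose integral $\int_0^a g_s\cos x\,dx$ depends continuously on $s$ and sweeps an interval around $c\sin a$, and then select $s$ realising the value $c\sin a$ by the intermediate value theorem. With such a $g$ fixed and extended symmetrically about $\pi/2$, the function $p=g/\phi_1$ satisfies all requirements, $\tilde{\Lambda}=\{1\}$ with $\M_1=0$, and Theorem \ref{th:AC} shows \eqref{eq:D} is not approximately controllable. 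I would close by observing that nothing is special to $-\partial_x^2$: for a general $\mathfrak{L}$ one still uses the sign-definite ground state $\phi_1$, the only change being that without symmetry the two moment identities must be solved simultaneously, which remains a soft finite-parameter problem.
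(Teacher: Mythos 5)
Your construction is correct and is essentially the paper's own argument: both choose $p$ proportional to $1/\phi_\lambda$ on $\omega$ so that $\partial_x(p\phi_\lambda)=0$ there, and then tune the extension of $p$ outside $\omega$ (you via a one-parameter family and the intermediate value theorem, the paper by rescaling monotone perturbations $\theta_a,\theta_b$) so that the two boundary moments $\int_0^a\partial_x(p\phi_\lambda)\phi_\lambda$ and $\int_b^\pi\partial_x(p\phi_\lambda)\phi_\lambda$ vanish, after which the Fattorini--Hautus test (equivalently Theorem \ref{th:AC}) fails. The only differences are your harmless specialization to $\mathfrak{L}=-\partial_x^2$, $\lambda=\lambda_1$ and a symmetric $\omega$, and the observation that $\tilde\Lambda=\{\lambda_1\}$, which is not actually needed since a single bad eigenvalue already violates condition \eqref{eq:AC}.
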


\begin{proof}
    Let us rewrite system \eqref{eq:D} in the case we consider here:
    \begin{equation} \label{eq:DnoQ}
        \left \{
        \begin{aligned}
          \partial_{t} y + \L y &= \begin{pmatrix} 0 & 0 \\ -p(x) & 0 \end{pmatrix} \partial_{x}y + \mathbb{1}_{\omega} \begin{pmatrix}
              v \\ 0
          \end{pmatrix} \\
            y(0,\cdot) = y(\pi,\cdot) &= 0 \\
            y(\cdot, 0) &= y^{0}
        \end{aligned}
        \right.
    \end{equation}
    We want to use the unique continuation property given by Theorem \ref{th:uc}, so we look for a $p$ and a control domain $\omega =(a,b)$ such that for some $\lambda \in \Lambda$
    $$
    \left\{
    \begin{aligned}
        \partial_{x} (p\phi_{\lambda}) &= 0 &\textnormal{in} \: \omega \\
        \displaystyle \int_{0}^{a}\partial_{x}(p\phi_{\lambda})\phi_{\lambda} &= \displaystyle \int_{b}^{\pi}\partial_{x}(p\phi_{\lambda})\phi_{\lambda} = 0
    \end{aligned}
    \right.
    $$
    Indeed, if it is possible to find such $p$ and $\omega$, then by applying Theorem \ref{th:fh}, we can conclude that the system is not approximately controllable.
    We define, for a fixed $\lambda \in \Lambda$,
    $$
    p = \left \{
    \begin{array}{ccc}
        p_0 + \theta_{a} \quad &\textnormal{in}& \quad (0,a) \\
        \displaystyle \frac{1}{\phi_\lambda} \quad &\textnormal{in}& \quad \omega \\
        p_0 + \theta_{b} \quad &\textnormal{in}& \quad (b,\pi)
        \end{array}
        \right.
    $$ 
    where $\omega$ is such that $\phi_{\lambda}$ does not vanish in $\omega$, and $p_0$, $\theta_a$, $\theta_b$ are chosen functions (see Figure \ref{fig:contrex1}) such that:
    \begin{itemize}
        \item $p_0$ is such that both the junctions with $\displaystyle \frac{1}{\phi_{\lambda}}$ in $a$ and $b$ are smooth.
        \item $\theta_a$ is decreasing in $(0,a)$ and such that $\theta_{a}(a)=0$.
        \item $\theta_b$ is increasing in $(b,\pi)$ and such that $\theta_{b}(b)=0$.
        \end{itemize} 
    \begin{figure}[ht]
        \begin{center}
\begin{tikzpicture}
\begin{axis}[
    axis lines = center,
    axis y line = none,
    xtick = {0},
    extra x ticks = {0.25, 0.75, 1},
    extra x tick labels = {\(a\), \(b\), \(\pi\)},
]

\addplot [
    domain=0:1,
    samples=100,
    color=black,
    ]
    {0.1*sin(pi*deg(x))};
\addlegendentry{$\phi_{\lambda}$}

\addplot [
    domain=0.25:0.75,
    samples=100,
    color=purple,
    ]
    {0.03/(sin(pi*deg(x)))};
\addlegendentry{\(\frac{1}{\phi_{\lambda}}\)}

\addplot [
    domain=0:0.25, 
    samples=100, 
    color=red,
]
{(x-0.25)^2};
\addlegendentry{\(\theta_{a}\)}

\addplot [
    domain=0.75:1, 
    samples=100, 
    color=blue,
    ]
    {(x-0.75)^2};
\addlegendentry{\(\theta_{b}\)}

\addplot [
    domain=0:0.25,
    samples=100,
    color=green,
    ]
    {0.03/sin(pi*deg(x)) *x^2*(-128*x+48)};
\addlegendentry{\(p_0\)}

\addplot [
    domain=0.75:1,
    samples=100,
    color=green,
    ]
    {0.03/sin(pi*deg(1-x)) *(1-x)^2*(-128*(1-x)+48)};
\end{axis}
\end{tikzpicture}
\end{center}
        \caption{Example of choices for $p_0$, $\theta_a$ and $\theta_b$}
            \label{fig:contrex1}
        \end{figure}
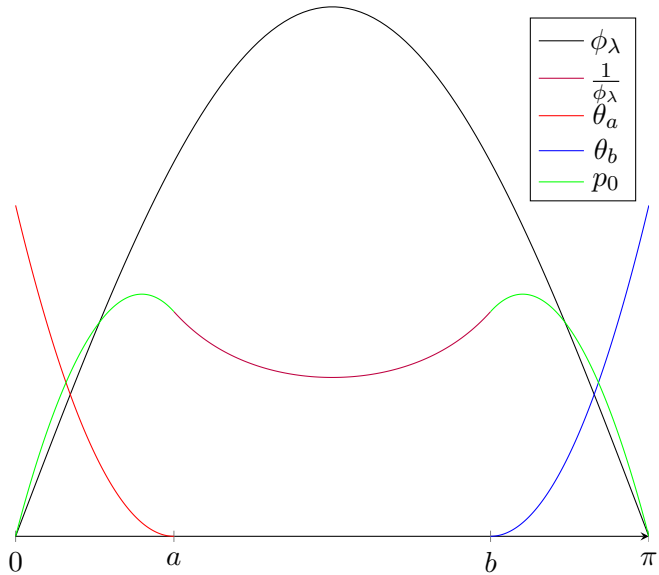 
    Then, it is clear that $\partial_{x}(p\phi_{\lambda}) = 0$ in $\omega$, and we have
    \begin{equation} \label{ippa}
        \int_{0}^{a}\partial_{x}(p\phi_{\lambda})\phi_{\lambda} = \underbrace{\int_{0}^{a}\partial_{x}(p_0\phi_{\lambda})\phi_{\lambda}}_{\coloneqq \alpha_a} + \underbrace{\int_{0}^{a}\partial_{x}(\theta_a\phi_{\lambda})\phi_{\lambda}}_{\coloneqq\beta_a} \\
    \end{equation}
    and
    \begin{equation} \label{ippb}
        \int_{b}^{\pi}\partial_{x}(p\phi_{\lambda})\phi_{\lambda} = \underbrace{\int_{b}^{\pi}\partial_{x}(p_0\phi_{\lambda})\phi_{\lambda}}_{\coloneqq \alpha_b} + \underbrace{\int_{b}^{\pi}\partial_{x}(\theta_b\phi_{\lambda})\phi_{\lambda}}_{\coloneqq\beta_b}.
    \end{equation}
    Integrating by parts the second term of the right-hand side \eqref{ippa}, we get
    $$
    \beta_a = \int_{0}^{a}\partial_{x}(\theta_a\phi_{\lambda})\phi_{\lambda} = \int_{0}^{a}\frac{\partial_{x}\theta_a}{2}\phi_{\lambda}^{2}.
    $$
    That is $\beta_a < 0$; in particular $\beta_a \neq 0$. Then we have, by rescaling our $\theta_a$ if needed, the following
    $$
    \int_{0}^{a}\partial_{x}(p\phi_{\lambda})\phi_{\lambda} = 0. $$
    Doing the exact same thing with the first term in \eqref{ippb} we also get that 
    $$
    \int_{b}^{\pi}\partial_{x}(p\phi_{\lambda})\phi_{\lambda} = 0.
    $$
    Finally, we found a function $p$ and a control domain $\omega$ such that the Fattorini-Hautus test is not verified. 
Since this function satisfies $(\supp p)\cap \omega \neq \emptyset$, this case is indeed a counter-example to the result claimed in \cite{duprez} (Theorem \ref{th:1} here).
\end{proof}

\section{Null controllability}
In this part, we will use the fact that $\Lambda$ is countable, and rearrange the eigenvalues in an increasing way. Indeed, now we write $\Lambda = \{ \lambda_k \}_{k \in \N}$. 
\subsection{Known results without order one coupling term}
In this report, we treat the case where the system contains a coupling term of order one, that we denoted $p$. In the literature, the case without this term, that is only with the order zero coupling term $q$, has already been treated, and almost extensively (see \cite{boyermorancey} and \cite{boyer}). Let us present some results about this particular case, that we will generalise to the problem with the order one term.
\vspace{1em}

First, we need to define a few quantities and notions. \begin{definition}
    For $F \in L^2(0,\pi, \R)$, $k \in \N$ we define:
    \begin{itemize}
        \item $\mathcal{N}_{k,1}(F,\omega)= \sup \left \{ \left \vert \displaystyle \int_{\mathcal{C}} F \phi_{\lambda_k} \right \vert, \: \mathcal{C} \textnormal{ connected component of } \overline{(0,\pi)\setminus \omega} \right \}$
        \item $\mathcal{N}_{k,2}(F,\omega)= \sup \left \{ \left \vert \displaystyle \int_{\mathcal{C}} F \psi_{\lambda_k} \right \vert, \: \mathcal{C} \textnormal{ connected component of } \overline{(0,\pi)\setminus \omega}, \mathcal{C} \cap \{0, \pi \} = \emptyset \right \}$
    \end{itemize}    
    and finally $\mathcal{N}_{k}(F,\omega)= \max \{\mathcal{N}_{k,1}(F,\omega), \mathcal{N}_{k,2}(F,\omega)\}. $
\end{definition}
\begin{definition}
    We call \textit{minimal null control time} a time $T_0 \geq 0$ such that
    \begin{itemize}
        \item for all $T>T_0$, System \eqref{eq:D} is null controllable.
        \item for all $T<T_0$, System \eqref{eq:D} is not null controllable.
    \end{itemize}
\end{definition}

In \cite{boyermorancey}, the following result about the minimal null control time is stated.

\begin{theorem}
    Suppose $\supp q \cap \omega = \emptyset$ and $\mathcal{N}_{k}((I_{\lambda_k}(q)-q)\phi_{\lambda_k},\omega) \neq 0 \: \forall k \in \N$ (that is exactly the characterization of the approximate controllability). Then the minimal null control time is given by
    \begin{equation} \label{eq:NCtime}
        T_0 = \limsup_{k \rightarrow \infty}{\frac{-\ln{\mathcal{N}_k((I_{\lambda_k}(q)-q)\phi_{\lambda_k},\omega)}}{\lambda_k}}.
    \end{equation}
\end{theorem}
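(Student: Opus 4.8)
The plan is to prove both bounds on $T_0$ via the \emph{moments method}, using the classical equivalence between null controllability of \eqref{eq:D} and an observability inequality for the adjoint system \eqref{eq:dual}: System \eqref{eq:D} is null controllable at time $T$ if and only if there is a constant $C(T)<\infty$ such that every solution $\xi$ of \eqref{eq:dual} satisfies $\Vert\xi(0)\Vert_{L^2(0,\pi)^2}^2\leq C(T)\int_0^T\Vert B^*\xi(t)\Vert_{L^2(\omega)}^2\,dt$, and $T_0$ is the infimum of the times for which $C(T)<\infty$. I abbreviate $\mathcal{N}_k:=\mathcal{N}_k((I_{\lambda_k}(q)-q)\phi_{\lambda_k},\omega)$, and I use that $\supp q\cap\omega=\emptyset$ makes $q$ vanish on $\omega$, so that the source $(I_{\lambda_k}(q)-q)\phi_{\lambda_k}$ defining $\psi_{\lambda_k}$ is concentrated away from the control region. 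I treat the two inequalities separately.

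For the lower bound I argue that $T<\limsup_k(-\ln\mathcal{N}_k)/\lambda_k$ forces $C(T)=\infty$. Testing the observability inequality on the single adjoint datum $\xi_F=\Psi_{\lambda_k}$ and using the semigroup action $e^{-t\A^*}\Psi_{\lambda_k}=e^{-\lambda_k t}(\Psi_{\lambda_k}-tI_{\lambda_k}(q)\Phi_{\lambda_k})$ from Proposition \ref{prop:spec}, I get $\Vert\xi(0)\Vert^2\gtrsim e^{-2T\lambda_k}$, while the orthogonality \eqref{eq:ortho} splits the observation as $\int_0^T\Vert B^*\xi\Vert_{L^2(\omega)}^2\,dt=\int_0^T e^{-2\lambda_k(T-t)}\bigl(\Vert\psi_{\lambda_k}\Vert_{L^2(\omega)}^2+(T-t)^2I_{\lambda_k}(q)^2\Vert\phi_{\lambda_k}\Vert_{L^2(\omega)}^2\bigr)dt\lesssim\lambda_k^{-1}\mathcal{N}_k^2$ up to factors polynomial in $\lambda_k$ (identifying $\Vert\psi_{\lambda_k}\Vert_{L^2(\omega)}$ with $\mathcal{N}_k$ via Theorem \ref{th:uc}, as discussed below). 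Hence the quotient is $\gtrsim\lambda_k e^{-2T\lambda_k}\mathcal{N}_k^{-2}$, which diverges along the subsequence where $\mathcal{N}_k<e^{-(T+\varepsilon)\lambda_k}$; so observability fails.

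For the upper bound I construct a control when $T>\limsup_k(-\ln\mathcal{N}_k)/\lambda_k$. Decomposing the adjoint data on $\{\Phi_{\lambda_k},\Psi_{\lambda_k}\}$ and inserting into the weak identity $-\langle y^0,\xi(0)\rangle=\int_0^T\langle\mathbb{1}_\omega Bv,\xi\rangle\,dt$ turns $y(T)=0$ into a countable family of scalar moment equations, which I solve with the biorthogonal family $\{r_{0,\lambda_k},r_{1,\lambda_k}\}$ of Theorem \ref{th:biortho} in the time variable and with spatial profiles supported in $\omega$; the hypothesis $\mathcal{N}_k\neq0$ is precisely what renders each mode individually solvable, with a spatial inversion gain of size $\mathcal{N}_k^{-1}$. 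The single-mode observability quotient is then $\lesssim\lambda_k e^{-2\varepsilon\lambda_k}\to0$, so the obstruction disappears mode by mode; the substantive work is to upgrade these pointwise estimates into the full observability inequality valid for \emph{arbitrary} superpositions of modes, where the non-orthogonality of the traces $\phi_{\lambda_k}|_\omega,\psi_{\lambda_k}|_\omega$ and the interaction of the time exponentials must be absorbed by the biorthogonal bound $\Vert r_{i,\lambda_k}\Vert_{L^2(0,T)}\lesssim e^{T\lambda_k/2+K\sqrt{\lambda_k}}$ together with the spectral gap of $\mathfrak{L}$.

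The main obstacle is twofold. First, $\mathcal{N}_k$ is built from integrals over the connected components of $\overline{(0,\pi)\setminus\omega}$, i.e. \emph{outside} $\omega$, whereas both the observation $B^*\xi$ and the control act \emph{inside} $\omega$; bridging the two is exactly the content of the unique continuation machinery of Theorem \ref{th:uc}, since the identity \eqref{eq:ipp} rewrites each $\int_C(I_{\lambda_k}(q)-q)\phi_{\lambda_k}^2$ and $\int_C(I_{\lambda_k}(q)-q)\phi_{\lambda_k}\psi_{\lambda_k}$ in terms of the boundary data $(\psi_{\lambda_k},\gamma\psi_{\lambda_k}')$ at the endpoints of $C$, all of which lie in $\overline{\omega}$, and this Cauchy data controls $\Vert\psi_{\lambda_k}\Vert_{L^2(\omega)}$ through the ODE. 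Second, one must verify that the comparison $\Vert\psi_{\lambda_k}\Vert_{L^2(\omega)}\sim\mathcal{N}_k$ holds uniformly in $k$ with only polynomial-in-$\lambda_k$ losses, so that those losses are invisible to the $\limsup$ and the lower and upper thresholds coincide at the value stated in \eqref{eq:NCtime}. Making this comparison quantitative and uniform is the delicate core of the argument.
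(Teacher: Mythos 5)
First, note that the paper does not actually prove this statement where it appears: it is quoted from the literature, and the paper's own proof technique is deployed only for the generalization with the order-one coupling term in the following subsection. Measured against that proof, your architecture is broadly the right one: your lower bound (testing observability on the single datum $\xi_F=\Psi_{\lambda_k}$ and using the semigroup formula from Proposition \ref{prop:spec}) is essentially the paper's minimality argument rewritten in observability language, and it is fine in outline \emph{modulo} the two-sided comparison between $\Vert\psi_{\lambda_k}\Vert_{L^2(\omega)}$, $\vert I_{\lambda_k}(q)\vert$ and $\mathcal{N}_k$ with only polynomial-in-$\lambda_k$ losses. You correctly identify that comparison as the technical core, but you never establish it; the paper imports it wholesale as Lemma \ref{lemmabound}, and without some such statement neither of your two bounds closes, so as written this remains an unproved assertion rather than a reduction to a known result.

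The genuine gap is in the upper bound. You announce the moments method but then describe the ``substantive work'' as upgrading single-mode estimates to a full observability inequality for arbitrary superpositions of modes; that is not what the moments method requires (it is in fact a harder problem, and one the method is designed to avoid). What is actually required is that the control series \eqref{eq:control}, with coefficients obtained by inverting each $2\times 2$ moment block, converges in $L^2((0,T)\times\omega)$. Carrying this out, the generalized-eigenfunction coefficient $\beta_{\lambda_k}$ carries a factor $1/\vert I_{\lambda_k}(q)\vert\sim e^{\lambda_k(T_0+\varepsilon)}$ (and $1/\mathcal{N}_k$ in the geometrically double case), while the biorthogonal estimate only gives $\Vert r_{i,\lambda_k}\Vert_{L^2(0,T)}\lesssim e^{T\lambda_k/2+K\sqrt{\lambda_k}}$; combined with the factor $e^{-\lambda_k T}$ coming from the right-hand side of the moment equations, this yields a general term of size $e^{-\lambda_k(T/2-T_0-\varepsilon)}$, i.e.\ convergence only for $T>2T_0$ for a generic initial datum. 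To reach the sharp threshold $T_0$ one must either restrict to initial data whose spectral coefficients already decay like $e^{-\lambda_k T_0}$, or --- as the paper does --- take $v\equiv 0$ on $(0,T_0)$ so that the free parabolic dissipation produces exactly that decay at time $T_0$, and only then solve the moment problem on the remaining interval. This waiting argument is indispensable for obtaining the stated value of $T_0$ rather than $2T_0$, and it is entirely absent from your proposal.
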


\subsection{Moments method with coupling term of order one}

In this part, we will prove the following theorem, that gives exactly the minimal null control time for our problem \eqref{eq:D}.

\begin{theorem}
    Suppose $(\supp p \cup \supp q)\cap \omega = \emptyset$, and that the system is approximately controllable, that is $\mathcal{N}_k(I_{\lambda_k}(p,q) + \partial_x(p\phi_{\lambda_k})-q\phi_{\lambda_k},\omega) \neq 0 \: \forall k \in \N$. The minimal null control time is given by
    \begin{equation}
        T_0 = \limsup_{k \rightarrow \infty} {\frac{ - \ln{\mathcal{N}_k(I_{\lambda_k}(p,q) \phi_{\lambda_k} + \partial_x(p\phi_{\lambda_k})-q\phi_{\lambda_k},\omega)}}{\lambda_k}}.
    \end{equation}
\end{theorem}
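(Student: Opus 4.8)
The plan is to reduce null controllability at time $T$ to a moment problem by duality, and then to solve that moment problem by separating the time and space variables, using the biorthogonal family of Theorem \ref{th:biortho} for the time part and the spectral structure of Proposition \ref{prop:spec} together with the moment vectors $\M_{\lambda}$ for the space part. First I would use the identity $\langle y(T),\xi_F\rangle = \langle y^0,\xi(0)\rangle + \int_0^T \langle \mathbb{1}_\omega Bv,\xi\rangle\,dt$, valid for the solution $\xi$ of the adjoint system \eqref{eq:dual}. Writing $\xi(t)=e^{-(T-t)\A^*}\xi_F$ and decomposing $\xi_F$ on the (generalized) eigenfunctions $\Phi_\lambda,\Psi_\lambda$ of $\A^*$, the condition $y(T)=0$ becomes a countable family of scalar moment equations on $v$. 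The action of the semigroup on a Jordan block gives the time profiles $e^{-(T-t)\lambda}$ for $\Phi_\lambda$ and, for $\lambda\in\Lambda_1$, $e^{-(T-t)\lambda}\big(\psi_\lambda-(T-t)I_\lambda(p,q)\phi_\lambda\big)$ for $\Psi_\lambda$, while the double eigenvalues of $\Lambda_2$ only produce pure exponentials. Since $B^*\xi=\xi_1$ and $p=q=0$ on $\omega$ by the hypothesis $(\supp p\cup\supp q)\cap\omega=\emptyset$, each moment equation is an integral of $v$ against $\phi_\lambda$ or $\psi_\lambda$ over $\omega$, weighted by one of these time profiles.

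Next I would look for $v$ in separated form $v(t,x)=\sum_k \sum_{i\in\{0,1\}} r_{i,\lambda_k}(T-t)\,g_{i,k}(x)$, with each $g_{i,k}$ supported in $\omega$. By the biorthogonality relation \eqref{eq:biortho}, inserting this ansatz into the moment equations decouples the time variable completely and leaves, for each $k$, a finite linear system for the spatial moments of the $g_{i,k}$ against $\phi_{\lambda_k}$ and $\psi_{\lambda_k}$ on $\omega$. This is where the quantity $\mathcal{N}_k$ enters: exactly as in the proofs of Theorem \ref{th:uc} and Theorem \ref{th:AC}, the solvability and the size of the minimal-norm solution of this spatial system are governed by the moment vector $\M_{\lambda_k}(I_{\lambda_k}(p,q)\phi_{\lambda_k}+\partial_x(p\phi_{\lambda_k})-q\phi_{\lambda_k},\omega)$, whose sup-norm is $\mathcal{N}_k$. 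The approximate-controllability hypothesis $\mathcal{N}_k\neq0$ guarantees solvability for every $k$, and yields a bound of the form $\|g_{i,k}\|_{L^2(\omega)}\lesssim |\langle y^0,\cdot\rangle|/\mathcal{N}_k$.

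Then I would prove the sufficiency part, $T>T_0\Rightarrow$ null controllability, by estimating the norm of the constructed $v$ in $L^2((0,T)\times\omega)$. Combining the biorthogonal bound $\|r_{i,\lambda_k}\|\le Ke^{\tau\lambda_k/2+K\sqrt{\lambda_k}+K/\tau}$ of Theorem \ref{th:biortho}, the decay factor coming from $\xi(0)=e^{-T\lambda_k}(\cdots)$, and the lower bound $\mathcal{N}_k\ge e^{-(T_0+\varepsilon)\lambda_k}$ valid for large $k$ when $T>T_0$, the general term of $\sum_k\|g_{i,k}\|^2\|r_{i,\lambda_k}\|^2$ should be dominated by an exponentially decaying factor, so that the series converges and defines an admissible control. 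I expect this convergence estimate to be the main obstacle: the whole difficulty is to match the exponential growth of the biorthogonal family against the possibly exponential decay of $\mathcal{N}_k$ so that the threshold comes out to be exactly $T_0$ and not a multiple of it, and this is the only place where the precise value of the limsup is used.

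Finally, for the necessity part $T<T_0\Rightarrow$ not null controllable, I would argue by contradiction with the observability inequality equivalent to null controllability. Choosing $\xi_F$ concentrated on a subsequence of modes $\lambda_{k_j}$ realizing the limsup, for which $\mathcal{N}_{k_j}\le e^{-(T_0-\varepsilon)\lambda_{k_j}}$ with $T_0-\varepsilon>T$, I would show that the observed quantity $\int_0^T\|B^*\xi\|_{L^2(\omega)}^2\,dt$ is too small relative to $\|\xi(0)\|^2$. The weakly observed direction is the $\Psi_{\lambda}$ (second-component) direction, whose observation through $\omega$ is measured precisely by $\mathcal{N}_{k_j}$ via the complement integrals appearing in Theorem \ref{th:uc}. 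This forces the observability constant to blow up as $j\to\infty$, contradicting null controllability; together with the sufficiency part, this identifies the minimal null control time as exactly $T_0$.
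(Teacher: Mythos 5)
Your overall strategy --- duality, reduction to a moment problem, a separated ansatz resolved in time by the biorthogonal family of Theorem \ref{th:biortho}, and a term-by-term estimate of the resulting series --- is the same as the paper's, and your necessity argument is the paper's bound $\Vert v\Vert\leq C\Vert y^0\Vert$ read in its dual (observability) form. But there is a genuine gap in the sufficiency part, and it is exactly the one you flag without resolving. Count the exponents in your own estimate: the biorthogonal family contributes $e^{+T\lambda_k/2+K\sqrt{\lambda_k}}$, the right-hand side of the moment equation contributes $e^{-T\lambda_k}$ times the spectral coefficient of $y^0$, and inverting the spatial system contributes a factor bounded by $C_\varepsilon e^{+(T_0+\varepsilon)\lambda_k}$ (through $1/\vert I_{\lambda_k}(p,q)\vert$ for simple eigenvalues and $1/\Vert\psi_{\lambda_k}\Vert_{L^2(\omega)}$ for double ones). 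For a generic $y^0\in L^2$ the coefficients $(y^0,\Psi_{\lambda_k})$ are only polynomially bounded, so the net exponent is $-T\lambda_k/2+(T_0+\varepsilon)\lambda_k$ and your series converges only for $T>2T_0$, not for $T>T_0$. The paper closes this factor-of-two gap with a two-step argument that is missing from your sketch: take $v\equiv 0$ on $(0,T_0)$, observe that the free evolution forces $\vert(y(T_0),\Phi_{\lambda_k})\vert,\ \vert(y(T_0),\Psi_{\lambda_k})\vert\leq Ce^{-\lambda_k T_0}$ (condition \eqref{eq:y0bound}), and then run the moment construction starting from time $T_0$; the extra factor $e^{-\lambda_k T_0}$ carried by the data exactly cancels the $e^{+\lambda_k T_0}$ coming from the inversion, and the surviving exponent decays for every $T>T_0$.

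A secondary, but still substantive, point: you attribute the solvability and the size of the spatial system directly to the vector $\M_{\lambda_k}$, ``whose sup-norm is $\mathcal{N}_k$''. In the paper the solvability comes instead from the normalization $(\phi_{\lambda_k},\psi_{\lambda_k})_{L^2(\omega)}=0$ of \eqref{eq:ortho}, which makes $\{B^*\Phi_{\lambda_k},B^*\Psi_{\lambda_k}\}$ orthogonal in $L^2(\omega)$, together with the uniform lower bound on $\Vert\phi_{\lambda_k}\Vert_{L^2(\omega)}$ of Theorem \ref{th:boundphi}; the quantity $\mathcal{N}_k$ only enters through the two-sided estimates of Lemma \ref{lemmabound}, which convert $\mathcal{N}_k$ into upper and lower bounds on $\sqrt{\lambda_k}\,\Vert\psi_{\lambda_k}\Vert_{L^2(\omega)}+\vert I_{\lambda_k}(p,q)\vert$. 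That quantitative lemma is indispensable in both directions of the proof (to bound the control in the sufficiency part and to bound the observation term in the necessity part), and the purely qualitative statements of Theorems \ref{th:uc} and \ref{th:AC} that you invoke cannot substitute for it.
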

\begin{proof} The proof will be divided in two parts. First, we will show that for all $T>T_0$, the system is null controllable. Then, we will show that this time $T_0$ is minimal.
\begin{itemize}
\item Take $T>0$. We will use a moments method. \\
Let $y^0=(y^0_1, y^0_2) \in L^2(0,\pi)^2$, and take $\xi_F \in \mathcal{B}\coloneqq \{\Phi_{\lambda_k}, \Psi_{\lambda_k}\}_{k \in \N}$ in the dual problem \eqref{eq:dual} . We integrate by parts and use the system:
$$
\begin{aligned}
    (y(T), \xi(T))_{L^2(0,\pi)^2} - (y^0, \xi(0))_{L^2(0,\pi)^2} &= \displaystyle \int_{0}^{T} \partial_{t} \left(\displaystyle \int_{0}^{\pi} (y_1 \xi_1 + y_2 \xi_2) dx \right) dt \\
    &= \displaystyle \int_{0}^{T} \displaystyle \int_0^\pi (\partial_t y_1 \xi_1 + y_1 \partial_t \xi_1 + \partial_t y_2 \xi_2 + y_2 \partial_t \xi_2) dx dt \\
    &= \displaystyle \int_{0}^{T} \displaystyle \int_0^\pi \underbrace{(\partial_t y_1 - \mathfrak{L}y_1)}_{=\mathbb{1}_{\omega} v} \xi_1 dx dt \\
    &+ \displaystyle \int_{0}^{T} \displaystyle \int_0^\pi \underbrace{(\partial_t y_2 - \mathfrak{L} y_2 + p \partial_x y_1 + qy_1)}_{=0}\xi_2 dx dt \\
    &= \displaystyle \int_{0}^{T} \displaystyle \int_0^\pi \mathbb{1}_{\omega} v \xi_1 dx dt \\
    &= \displaystyle \int_{0}^{T} \displaystyle \int_0^\pi v \mathbb{1}_{\omega} B^* \xi dx dt.
\end{aligned}
$$
Since $\mathcal{B}$ is a complete family in $L^2(0,\pi)^2$, system \eqref{eq:D} is null controllable if and only if for every $y_0 \in L^2(0,\pi)^2$ there exists a control $v \in L^2((0,\pi) \times (0,T))$ such that for all $\xi_F \in \mathcal{B}$ the solution $y$ of system \eqref{eq:D} satisfies 
\begin{equation} \label{eq:momentpb}
    \displaystyle \int_{0}^{T} \displaystyle \int_0^\pi v \mathbb{1}_{\omega} B^* \xi dx dt =- (y^0, \xi(0))_{L^2(0,\pi)^2}
\end{equation}
where $\xi$ is the solution to the dual problem associated with the data $\xi_F$, that is $\xi = e^{-(T-t)\A^*}\xi_F$.
\vspace{1em}

Now, we look for a control $v$ such that \eqref{eq:momentpb} is true. We search it of the following form
\begin{equation} \label{eq:control}
    \begin{aligned}
        v(t,x) = &\sum_{\mu \in \Lambda_1} \left( \alpha_{\mu} r_{\mu,0}(t)f_{\mu}(x) + \beta_{\mu} r_{\mu,1}(t) g_{\mu}(x) \right) \\
        + &\sum_{\mu \in \Lambda_2} r_{\mu, 0} (t) [\gamma_{\mu} h_{\mu}(x) + \theta_{\mu} \Tilde{h}_{\mu}(x)]
    \end{aligned}
\end{equation}

where $f_{\mu}, g_{\mu}, h_{\mu}, \Tilde{h}_{\mu}$ are functions in $L^2(0,\pi)$ to be determined such that their support is included in $\omega$, and $\alpha_{\mu}, \beta_{\mu}, \gamma_{\mu}, \theta_{\mu} \in \R$.

Let us suppose that such a control exists, and explicit the form of the different parameters. We distinguish two cases depending on the geometrical multiplicity of the eigenvalue $\lambda_k$.

\begin{itemize}
    \item First case: $\lambda_k \in \Lambda_1$ is a simple eigenvalue, that is $I_{\lambda_k}(p,q) \neq 0$. 

    We recall that Proposition \ref{prop:spec} gives that $\Phi_{\lambda_k}$ is an eigenvector and $\Psi_{\lambda_k}$ is a generalized eigenvector such that:
    $$
    \left \{
    \begin{aligned}
        (\A^* - \lambda_k)\Phi_{\lambda_k} &= 0 \\
        (\A^* - \lambda_k)\Psi_{\lambda_k} &= I_{\lambda_k}(p,q) \Phi_{\lambda_k}.
    \end{aligned}
    \right.
    $$
    Thus, we can explicit $\xi$:
    \begin{itemize}
        \item if $\xi_F = \Phi_{\lambda_k}$, then 
        \begin{equation}
            e^{-(T-t)\A^*} \Phi_{\lambda_k} = e^{-(T-t)\lambda_k} \Phi_{\lambda_k}.
        \end{equation}
        \item and if $\xi_F = \Psi_{\lambda_k}$, then 
        \begin{equation} \label{eq:mompsi}
            e^{-(T-t)\A^*} \Psi_{\lambda_k} = e^{-(T-t)\lambda_k} \left[\Psi_{\lambda_k} -(T-t) I_{\lambda_k}(p,q)\Phi_{\lambda_k} \right].
        \end{equation}
    \end{itemize}
    For such $\lambda_k$, the equation in the moment problem rewrites:
    \begin{equation} \label{eq:momphi}
        \displaystyle \int_0^T \left( v(t,\cdot), B^* e^{-(T-t)\lambda_k} \Phi_{\lambda_k} \right)_{L^2(\omega)} dt = - (y^0, e^{-T \lambda_k}\Phi_{\lambda_k})_{L^2(0,\pi)}
    \end{equation}
    and
    \begin{dmath} \label{eq:mompsi2}
        \displaystyle \int_0^T \left( v(t,\cdot), B^* e^{-(T-t)\lambda_k} \left[\Psi_{\lambda_k} -(T-t) I_{\lambda_k}(p,q)\Phi_{\lambda_k} \right] \right)_{L^2(\omega)} dt = - (y^0, e^{-T \lambda_k}(\Psi_{\lambda_k} - T I_{\lambda_k}(p,q)\Phi_{\lambda_k}))_{L^2(0,\pi)}.
    \end{dmath}
    We compute the left-hand side of \eqref{eq:momphi} with $v$ of the form given in \eqref{eq:control}. Note that the biorthogonality condition \eqref{eq:biortho} gives that the only remaining term in the sum given in \eqref{eq:control} is the one with $\lambda_k$. We thus get
    \begin{dmath*}
            \displaystyle \int_0^T  \left( v(t,\cdot), B^* e^{-(T-t)\lambda_k} \Phi_{\lambda_k} \right)_{L^2(\omega)} dt
            = \alpha_{\lambda_k}\left(f_{\lambda_k}, B^*\Phi_{\lambda_k}\right)_{L^2(\omega)}.
    \end{dmath*}
    The same kind of computation gives, for the left-hand side of \eqref{eq:mompsi}:
    \begin{dmath*}
        \displaystyle \int_0^T  \left( v(t,\cdot), B^* e^{-(T-t)\lambda_k} \left[\Psi_{\lambda_k} -(T-t) I_{\lambda_k}(p,q)\Phi_{\lambda_k} \right] \right)_{L^2(\omega)} dt
        = \alpha_{\lambda_k}\left(f_{\lambda_k}, B^*\Psi_{\lambda_k}\right)_{L^2(\omega)} + \beta_{\lambda_k} I_{\lambda_k}(p,q) \left(g_{\lambda_k}, B^* \Phi_{\lambda_k} \right)_{L^2(\omega)}.
    \end{dmath*}
    Thus, our moment problem reduces to finding $\alpha_{\lambda_k}, \beta_{\lambda_k}, f_{\lambda_k}$ and $g_{\lambda_k}$ such that
    $$
    \left \{
    \begin{aligned}
        \alpha_{\lambda_k}\left(f_{\lambda_k}, B^*\Phi_{\lambda_k}\right)_{L^2(\omega)} &= -e^{-\lambda_k T}(y^0, \Phi_{\lambda_k})_{L^2(0,\pi)} \\
        \alpha_{\lambda_k}\left(f_{\lambda_k}, B^*\Psi_{\lambda_k}\right)_{L^2(\omega)} + \beta_{\lambda_k} I_{\lambda_k}(p,q)\left(g_{\lambda_k}, B^* \Phi_{\lambda_k} \right)_{L^2(\omega)} &= -e^{-\lambda_k T} (y^0,\Psi_{\lambda_k}-T\Phi_{\lambda_k})_{L^2(0,\pi)} .
    \end{aligned}
    \right.
$$
    Let us take $f_{\lambda_k} = g_{\lambda_k} = B^* \mathbb{1}_{\omega}\Phi_{\lambda_k}$:
    $$
    \left \{
    \begin{aligned}
        \alpha_{\lambda_k} \Vert B^* \Phi_{\lambda_k} \Vert^{2}_{L^2(\omega)} &= -e^{-\lambda_k T}(y^0, \Phi_{\lambda_k})_{L^2(0,\pi)} \\
        \alpha_{\lambda_k}\left(B^* \Phi_{\lambda_k}, B^*\Psi_{\lambda_k}\right)_{L^2(\omega)} + \beta_{\lambda_k} I_{\lambda_k}(p,q) \Vert B^* \Phi_{\lambda_k} \Vert^{2}_{L^2(\omega)} &= -e^{-\lambda_k T} (y^0,\Psi_{\lambda_k}-TI_{\lambda_k}(p,q)\Phi_{\lambda_k})_{L^2(0,\pi)}.
    \end{aligned}
    \right.
    $$
    Since $I_{\lambda_k}(p,q)\Vert B^* \Phi_{\lambda_k} \Vert^{2}_{L^2(\omega)} \neq 0$, we can solve and get:
    $$
    \left \{
    \begin{aligned}
        \alpha_{\lambda_k} &= \frac{-e^{-\lambda_k T}(y^0, \Phi_{\lambda_k})_{L^2(0,\pi)}}{\Vert B^* \Phi_{\lambda_k} \Vert^{2}_{L^2(\omega)}} \\
        \beta_{\lambda_k} &= \frac{-e^{-\lambda_k T} (y^0,\Psi_{\lambda_k}-TI_{\lambda_k}(p,q)\Phi_{\lambda_k})_{L^2(0,\pi)}}{I_{\lambda_k}(p,q)\Vert B^* \Phi_{\lambda_k} \Vert^{2}_{L^2(\omega)}} + \frac{e^{-\lambda_k T}(y^0, \Phi_{\lambda_k})_{L^2(0,\pi)}(B^*\Phi_{\lambda_k}, B^*\Psi_{\lambda_k})_{L^2(\omega)}}{I_{\lambda_k}(p,q)\Vert B^* \Phi_{\lambda_k} \Vert^{4}_{L^2(\omega)}}.
    \end{aligned}
    \right.
    $$
    
    \item Second case: $\lambda_k \in \Lambda_2$ is a double eigenvalue, that is $I_{\lambda_k}(p,q) = 0$. 

    Proposition \ref{prop:spec} now gives that $\Phi_{\lambda_k}$ and $\Psi_{\lambda_k}$ are two independant eigenvectors:
    $$
    \left \{
    \begin{aligned}
        (\A^* - \lambda_k)\Phi_{\lambda_k} &= 0 \\
        (\A^* - \lambda_k) \Psi_{\lambda_k} &= 0.
    \end{aligned}
    \right.
    $$    
    Hence we get:
    \begin{equation}
            e^{-(T-t)\A^*} \Phi_{\lambda_k} = e^{-(T-t)\lambda_k} \Phi_{\lambda_k}
    \end{equation}
    and 
    \begin{equation}
            e^{-(T-t)\A^*} \Psi_{\lambda_k} = e^{-(T-t)\lambda_k} \Psi_{\lambda_k}.
    \end{equation}
    The moment problem rewrites: for $\lambda_k \in \Lambda_2$,
    \begin{equation} \label{eq:momphi2}
        \displaystyle \int_0^T \left( v(t,\cdot), B^* e^{-(T-t)\lambda_k} \Phi_{\lambda_k} \right)_{L^2(\omega)} dt = - (y^0, e^{-T\lambda_k}\Phi_{\lambda_k})_{L^2(0,\pi)}
    \end{equation}
    and 
    \begin{equation} \label{eq:momphitilde}
        \displaystyle \int_0^T \left( v(t,\cdot), B^* e^{-(T-t)\lambda_k} \Psi_{\lambda_k} \right)_{L^2(\omega)} dt = - (y^0, e^{-T\lambda} \Psi_{\lambda_k})_{L^2(0,\pi)} = .
    \end{equation}
    As in the previous case, we compute the left-hand side of \eqref{eq:momphi2} with a control of the form given by \eqref{eq:control}. We have: 
    $$
    \begin{aligned}
        \sum_{\mu \in \Lambda_2} \bigg[ \gamma_{\mu} (h_{\mu}, B^* \Phi_{\lambda_k})_{L^2(\omega)} &\underbrace{\displaystyle \int_0^T e^{-(T-t)\lambda_k}r_{\mu,0}(t)dt}_{= \delta_{\mu,\lambda_k}}
        + \theta_{\mu} (\Tilde{h}_{\mu}, B^* \Phi_{\lambda_k})_{L^2(\omega)} \underbrace{\displaystyle \int_0^T e^{-(T-t)\lambda_k}r_{\mu,0}(t)dt}_{= \delta_{\mu,\lambda_k}} \bigg] \\
        &= \gamma_{\lambda_k}(h_{\lambda_k}, B^* \Phi_{\lambda_k})_{L^2(\omega)} + \theta_{\lambda_k}(\Tilde{h}_{\lambda_k}, B^*\Phi_{\lambda_k})_{L^2(\omega)}.
    \end{aligned}
    $$
    The left-hand side of \eqref{eq:momphitilde} becomes
    $$
    \begin{aligned}
        \sum_{\mu \in \Lambda_2} \bigg[ \gamma_{\mu} (h_{\mu}, B^* \Psi_{\lambda_k})_{L^2(\omega)} &\underbrace{\displaystyle \int_0^T e^{-(T-t)\lambda_k}r_{\mu,0}(t)dt}_{= \delta_{\mu,\lambda_k}} 
        + \theta_{\mu} (\Tilde{h}_{\mu}, B^* \Psi_{\lambda_k})_{L^2(\omega)} \underbrace{\displaystyle \int_0^T e^{-(T-t)\lambda_k}r_{\mu,0}(t)dt}_{= \delta_{\mu,\lambda_k}} \bigg] \\
        &= \gamma_{\lambda_k}(h_{\lambda_k}, B^* \Psi_{\lambda_k})_{L^2(\omega)} + \theta_{\lambda_k}(\Tilde{h}_{\lambda_k}, B^*\Psi_{\lambda_k})_{L^2(\omega)}.
    \end{aligned}
    $$

    The moment problem reduces then to finding $\gamma_{\lambda_k}, \theta_{\lambda_k}, h_{\lambda_k}$ and $\Tilde{h}_{\lambda_k}$ such that
    \begin{equation} \label{eq:mom2}
        \left\{
        \begin{aligned}
            \gamma_{\lambda_k} (h_{\lambda_k}, B^*\Phi_{\lambda_k})_{L^2(\omega)} + \theta_{\lambda_k}(\Tilde{h}_{\lambda_k}, B^* \Phi_{\lambda_k})_{L^2(\omega)} &= -e^{-\lambda_k T}(y^0, \Phi_{\lambda_k})_{L^2(0,\pi)} \\
            \gamma_{\lambda_k} (h_{\lambda_k}, B^*\Psi_{\lambda_k})_{L^2(\omega)} + \theta_{\lambda_k}(\Tilde{h}_{\lambda_k}, B^* \Psi_{\lambda_k})_{L^2(\omega)} &= -e^{-\lambda_k T}(y^0, \Psi_{\lambda_k})_{L^2(0,\pi)}
        \end{aligned}
        \right.
    \end{equation}
    If we find $\{h_{\lambda_k}, \Tilde{h}_{\lambda_k}\}$ a biorthogonal family to $\{B^* \Phi_{\lambda_k}, B^* \Psi_{\lambda_k}\}$ in $L^2(\omega)$, we will have:
    \begin{equation} \label{eq:mom2simpl}
        \eqref{eq:mom2} \Leftrightarrow \left \{ \begin{aligned}
            \gamma_{\lambda_k} &= -e^{-\lambda_k T}(y^0, \Phi_{\lambda_k})_{L^2(0,\pi)} \\
            \theta_{\lambda_k}&= -e^{-\lambda_k T}(y^0, \Psi_{\lambda_k})_{L^2(0,\pi)}.
        \end{aligned}
        \right.
    \end{equation}
    Hence we look for a family $\{h_{\lambda_k},\Tilde{h}_{\lambda_k}\}$ such that:
    \begin{equation}
        \left \{
        \begin{aligned}
            (h_{\lambda_k},B^* \Phi_{\lambda_k})_{L^2(\omega)} &= (\Tilde{h}_{\lambda_k},B^* \Psi_{\lambda_k})_{L^2(\omega)} = 1 \\
            (h_{\lambda_k},B^* \Psi_{\lambda_k})_{L^2(\omega)} &= (\Tilde{h}_{\lambda_k},B^* \Phi_{\lambda_k})_{L^2(\omega)} = 0.
        \end{aligned}
        \right.
    \end{equation}
    Take $\displaystyle h_{\lambda_k}\coloneqq \frac{B^* \mathbb{1}_{\omega} \Phi_{\lambda_k}}{\Vert B^* \Phi_{\lambda_k} \Vert^2_{L^2(\omega)}}$ and $\displaystyle \Tilde{h}_{\lambda_k}\coloneqq \frac{B^* \mathbb{1}_{\omega} \Psi_{\lambda_k}}{\Vert B^* \Psi_{\lambda_k} \Vert^2_{L^2(\omega)}}$.
    Then we can check that $(h_{\lambda_k}, B^* \Phi_{\lambda_k})_{L^2(\omega)}=(\Tilde{h}_{\lambda_k},B^* \Phi_{\lambda_k})_{L^2(\omega)}=1$, and
    $$
    (h_{\lambda_k}, B^* \Psi_{\lambda_k})_{L^2(\omega)} = \frac{(B^* \Phi_{\lambda_k}, B^* \Psi_{\lambda_k})_{L^2(\omega)}}{\Vert B^* \Phi_{\lambda_k} \Vert^2_{L^2(\omega)}}
    $$
    $$
    (\Tilde{h}_{\lambda_k}, B^* \Phi_{\lambda_k})_{L^2(\omega)} = \frac{(B^* \Phi_{\lambda_k}, B^* \Psi_{\lambda_k})_{L^2(\omega)}}{\Vert B^* \Phi_{\lambda_k} \Vert^2_{L^2(\omega)}}.
    $$
    By definition, $\Phi_{\lambda_k}= \begin{pmatrix}
        \phi_{\lambda_k} \\
        0
    \end{pmatrix}$ and $\Psi_{\lambda_k} = \begin{pmatrix}
        \psi_{\lambda_k} \\
        \phi_{\lambda_k}
    \end{pmatrix}$, so 
    $$
    (B^* \Phi_{\lambda_k}, B^* \Psi_{\lambda_k})_{L^2(\omega)}=(\phi_{\lambda_k}, \psi_{\lambda_k})_{L^2(\omega)}.
    $$
    Since we have chosen $\psi_{\lambda_k}$ such that the orthogonality condition \eqref{eq:ortho} is satisfied, we have
    $$
    (B^* \Phi_{\lambda_k}, B^* \Psi_{\lambda_k})_{L^2(\omega)}=0.
    $$
    Finally, we found a biorthogonal family to $\{B^* \Phi_{\lambda_k}, B^* \Psi_{\lambda_k}\}$ in $L^2(\omega)$.
    Thus, it only remains to solve \eqref{eq:mom2simpl}, that is
    $$
    \left \{
    \begin{aligned}
        \gamma_{\lambda_k} &= -e^{-\lambda_k T}(y^0, \Phi_{\lambda_k})_{L^2(0,\pi)} \\
        \theta_{\lambda_k} &= -e^{-\lambda_k T}(y^0, \Psi_{\lambda_k})_{L^2(0,\pi)}.
    \end{aligned}
    \right.
    $$
\end{itemize}
Now, we want to prove that, when the initial data $y^0$ satisfies
\begin{equation} \label{eq:y0bound}
\forall \lambda_k \in \Lambda, \: \left \{
    \begin{aligned}
        \vert (y^0, \Phi_{\lambda_k}) \vert &\leq C e^{-\lambda_k T_0} \\
        \vert (y^0, \Psi_{\lambda_k}) \vert &\leq C e^{-\lambda_k T_0}
    \end{aligned}
    \right.
\end{equation}
for a constant $C>0$, the series defined in \eqref{eq:control} with the parameters $\alpha_{\lambda_k}, \beta_{\lambda_k}, f_{\lambda_k}$ and $g_{\lambda_k}$ as above converges for all $T>0$.
We will need the following lemma, proved in \cite{boyermorancey}.

\begin{lemma} \label{lemmabound}
    \begin{enumerate}
        \item There exist $K \in \N$ and $C>0$ such that for all $k \geq K$, for all $F \in L^2(0,\pi,\R)$ and for all $u$ solution to 
        $$
        (\mathcal{L} - \lambda_k)u = F
        $$
        we have
        \begin{equation}
            \mathcal{N}_k(F,\omega) \leq C(\sqrt{\lambda_k} \Vert u \Vert_{\omega} + \sqrt{\lambda_k}(\vert u(0) \vert + \vert u(\pi) \vert ) + \Vert F \Vert_{\omega}).
        \end{equation}
        \item There exist $K \in \N$ and $C>0$ such that for all $k \geq K$, for all $F \in L^2(0,\pi,\R)$ such that $\displaystyle \int_0^{\pi} F \phi_{\lambda_k} = 0$ there exists $u$ solution to
        \begin{equation}
            \left \{
            \begin{aligned}
                (\mathcal{L} - \lambda_k)u &= F \\
                u(0)=u(\pi) &=0
            \end{aligned}
            \right.
        \end{equation}
        such that 
        \begin{equation}
            (\sqrt{\lambda_k} \Vert u \Vert_{\omega} - \Vert F \Vert_{\omega}) \leq C \mathcal{N}_k(F,\omega).
        \end{equation}
    \end{enumerate}
\end{lemma}

We study term by term the convergence of the associated series over $\lambda \in \Lambda_1$.
First, we look at the term with $\alpha_{\lambda_k}$, using the estimate given by Theorem \ref{th:biortho}:
$$
\begin{aligned}
    \Vert \alpha_{\lambda_k} r_{\lambda_k, 0} B^* \Phi_{\lambda_k} \Vert_{L^2((0,T) \times \omega)} &\leq e^{-\lambda_k T} C e^{-\lambda_k T_0} \Vert r_{\lambda_k, 0} \Vert_{L^2(0,T)} \frac{\Vert B^* \Phi_{\lambda_k} \Vert_{L^2(\omega)}}{\Vert B^* \Phi_{\lambda_k} \Vert^2_{L^2(\omega)}} \\
    &\leq C e^{-\lambda_k (T+T_0)} \frac{K e^{T \frac{\lambda_k}{2} + K \sqrt{\lambda_k} + \frac{K}{T}}}{\Vert B^* \Phi_{\lambda_k} \Vert_{L^2(\omega)}} \\
\end{aligned}
$$
To pursue, we need to have a lower bound on $\Vert B^* \Phi_{\lambda_k} \Vert_{L^2(\omega)}$. The following result, Theorem IV.1.3 in \cite{coursboyer}, gives exactly what we need.
\begin{theorem} \label{th:boundphi}
    There exists $C_1(\omega)>0$ such that
    \begin{equation}
        \Vert \phi_{\lambda_k} \Vert_{L^2(\omega)}^2 > C_1(\omega), \; \forall \lambda_k \in \Lambda.
    \end{equation}
\end{theorem}

Thus, we obtain
\begin{equation}
        \Vert \alpha_{\lambda_k} r_{\lambda_k, 0} B^* \Phi_{\lambda_k} \Vert_{L^2((0,T) \times \omega)} \leq C e^{-\lambda_k (\frac{T}{2}+T_0)} \frac{e^{K \sqrt{\lambda_k}}}{\sqrt{C_1(\omega)}}.
\end{equation}
Applying the Young inequality, we finally get
\begin{equation}
    \Vert \alpha_{\lambda_k} r_{\lambda_k, 0} B^* \Phi_{\lambda_k} \Vert_{L^2((0,T) \times \omega)} \leq C_T e^{-T_0 \lambda_k}.
\end{equation}
The series $\displaystyle \sum_{\lambda_k \in \Lambda_1} e^{-\lambda_k T_0}$ is convergent for any $T>0$, so by comparison, we can conclude that our series
$$
\sum_{\lambda_k \in \Lambda_1} \alpha_{\lambda_k} r_{\lambda_k, 0}(\cdot) f_{\lambda_k}(\cdot)
$$
converges in $L^2(0,T)$ for any $T>0$.
\vspace{1em}

Now, we do the same analysis on the term with $\beta_{\lambda_k}$ and get: 
\begin{multline} \label{eq:betabound}
    \Vert \beta_{\lambda_k} r_{\lambda_k, 1}g_{\lambda_k} \Vert_{L^2((0,T) \times \omega)} \leq C e^{-\lambda_k (T+T_0)}\frac{e^{T\frac{\lambda_k}{2} + K\sqrt{\lambda_k}+\frac{K}{T}}}{\vert I_{\lambda_k}(p,q) \vert} \\
    \times \left(\frac{\Vert \Psi_{\lambda_k} \Vert_{L^2(0,\pi)^2} + \vert I_{\lambda_k}(p,q)\vert T}{\Vert B^* \Phi_{\lambda_k} \Vert_{L^2(\omega)}} + \frac{\Vert B^* \Psi_{\lambda_k} \Vert_{L^2(\omega)}}{\Vert B^* \Phi_{\lambda_k} \Vert_{L^2(\omega)}^3} \right)
\end{multline}

We need to find upper bounds for $\Vert \Psi_{\lambda_k} \Vert_{L^2(0,\pi)^2}$ and $\Vert B^* \Psi_{\lambda_k} \Vert_{L^2(\omega)} = \Vert \psi_{\lambda_k} \Vert_{L^2(\omega)}$. For this, we will simply find an upper bound for $\Vert \psi_{\lambda_k} \Vert_{L^2(0,\pi)}$. Indeed, $\Psi_{\lambda_k} = \begin{pmatrix}
    \psi_{\lambda_k} \\
    \phi_{\lambda_k}
\end{pmatrix}$ and we already know that $\Vert \phi_{\lambda_k} \Vert_{L^2(0,\pi)} \leq 1$. \\
We will need the following result from \cite{coursboyer}.

\begin{lemma} \label{th:upboundpsi}
    Let $f: (0,\pi) \mapsto \R$ be a continuous function and $\lambda_k \geq 1$. Suppose that $u: (0,\pi) \mapsto \R$ satisfies the following equation
    $$
    \mathfrak{L} u = \lambda_k u + f.
    $$
    We define, for $x\in (0,\pi)$, the vectors
    $$
    U(x)\coloneqq \begin{pmatrix}
        u(x) \\
        \sqrt{\frac{\gamma(x)}{\lambda_k}} u'(x)
    \end{pmatrix} \textnormal{ and } F(x) = \begin{pmatrix}
        0 \\
        - \frac{f(x)}{\sqrt{\gamma(x) \lambda_k}}.
    \end{pmatrix}
    $$
    Then, there exists $C>0$ independent of $\lambda_k$ such that for any $x,y \in (0,\pi)$ we have 
    \begin{equation} \label{upboundpsi}
        \Vert U(x) \Vert \leq C \left(\Vert U(y) \Vert + \left\vert \displaystyle \int_x^y \Vert F(s) \Vert ds \right \vert \right).
    \end{equation}
\end{lemma}
Let us take $\widetilde{\psi}_{\lambda_k}$ the solution of 
\begin{equation} \label{eq:psi}
    (\mathfrak{L} - \lambda_k)\widetilde{\psi}_{\lambda_k} = I_{\lambda_k}(p,q)\phi_{\lambda_k} + \partial_x (p\phi_{\lambda_k}) - q\phi_{\lambda_k}
\end{equation}

such that $\widetilde{\psi}_{\lambda_k}(0) = \widetilde{\psi}_{\lambda_k}'(0)=0$. \\
Applying Lemma \ref{th:upboundpsi} with $y=0$, $u= \widetilde{\psi}_{\lambda_k}$ and $f = I_{\lambda_k}(p,q)\phi_{\lambda_k} + \partial_x(p\phi_{\lambda_k}) - q\phi_{\lambda_k}$, we get for all $x\in (0,\pi)$
$$
\Vert U(x) \Vert \leq C\left(\underbrace{ \Vert U(0) \Vert}_{=0} + \displaystyle \int_0^x \Vert (I_{\lambda_k}(p,q)\phi_{\lambda_k} + \partial_x(p\phi_{\lambda_k}) - q\phi_{\lambda_k})(s) \Vert ds \right).
$$
That is
$$
\vert \psi_{\lambda_k} (x) \vert + \sqrt{\frac{\gamma(x)}{\lambda_k}} \vert \psi_{\lambda_k}'(x) \vert \leq C \Vert I_{\lambda_k}(p,q)\phi_{\lambda_k} + \partial_x(p\phi_{\lambda_k}) - q\phi_{\lambda_k} \Vert_{L^2(0,\pi)}.
$$
We know that 
\begin{itemize}
    \item $\Vert \partial_x \phi_{\lambda_k} \Vert_{L^2(0,\pi)} \leq \sqrt{\lambda_k}$,
    \item $\Vert \phi_{\lambda_k} \Vert_{L^2(0,\pi)} \leq 1$,
    \item $\vert I_{\lambda_k}(p,q) \vert \leq \Vert q \Vert_{L^\infty(0,\pi)} + \frac{\Vert \partial_x p \Vert_{L^\infty(0,\pi)}}{2}$,
    \item $\Vert q \Vert_{L^\infty(0,\pi)}$, $\Vert p \Vert_{L^\infty(0,\pi)}$ and $\Vert \partial_x p \Vert_{L^\infty(0,\pi)}$ are bounded.
\end{itemize}
Hence, we get the following estimate, with $C>0$,
\begin{equation} \label{eq:boundpsipi}
    \Vert \psi_{\lambda_k} \Vert_{L^2(0,\pi)} \leq C\left(1+\sqrt{\lambda_k}\right).
\end{equation}
Since $\Vert \psi_{\lambda_k} \Vert_{L^2(\omega)} \leq \Vert \psi_{\lambda_k} \Vert_{L^2(0,\pi)}$, we also have
\begin{equation} \label{eq:boundpsiomega}
    \Vert \psi_{\lambda_k} \Vert_{L^2(\omega)} \leq C\left(1+\sqrt{\lambda_k}\right)
\end{equation}
Thus, \eqref{eq:betabound} gives
$$
\begin{aligned}
    \Vert \beta_{\lambda_k} r_{\lambda_k, 1}g_{\lambda_k} \Vert_{L^2((0,T) \times \omega)} &\leq C \frac{e^{- \lambda_k (\frac{T}{2} + T_0) + K\sqrt{\lambda_k}}}{\vert I_{\lambda_k}(p,q) \vert} \left(1+\sqrt{\lambda_k}\right) \\
    &\leq C_T \frac{e^{-\lambda_k (T_0 + \frac{T}{4})}}{\vert I_{\lambda_k} (p,q)\vert}  \left(1+\sqrt{\lambda_k}\right).
\end{aligned}
$$
By definition of $T_0$, for all $\varepsilon>0$ there exists a constants $C_{\varepsilon}>0$ such that for $k$ large enough
\begin{equation}
    e^{-\lambda_k (T_0 + \varepsilon)} \leq C_{\varepsilon} \vert I_{\lambda_k}(p,q) \vert.
\end{equation}
Then, we get, for $k$ large enough
\begin{equation}
    \begin{aligned}
        \Vert \beta_{\lambda_k} r_{\lambda_k, 1}g_{\lambda_k} \Vert_{L^2((0,T) \times \omega)} &\leq C_{T,\varepsilon} \frac{e^{-\lambda_k (T_0 + \frac{T}{4})}}{e^{-(T_0 + \varepsilon)}} (1+\sqrt{\lambda_k}) &\leq C_{T,\varepsilon} e^{-\lambda_k (\frac{T}{4} - \varepsilon)} (1 + \sqrt{\lambda_k}).
    \end{aligned}
\end{equation}
Taking $\varepsilon = \frac{T}{8}$ for example, we obtain that the series converges for all $T>0$. \\
It remains to study the convergence of the series on $\Lambda_2$. As in both previous cases, we find an estimate on the general term using Theorem \ref{th:biortho} and Theorem \ref{th:boundphi} on $\phi_{\lambda_k}$:
$$
\begin{aligned}
    \Vert r_{\lambda_k,0}[\gamma_{\lambda_k} h_{\lambda_k} + \theta_{\lambda_k} \Tilde{h}_{\lambda_k}] \Vert_{L^2((0,T) \times \omega)} &\leq K e^{-T \frac{\lambda_k}{2} + K \sqrt{\lambda_k} + \frac{K}{T}} \bigg[ \frac{\vert (y^0,\Phi_{\lambda_k}) \vert}{\Vert \phi_{\lambda_k} \Vert_{L^2(\omega)}^2} + \frac{\vert (y^0,\Psi_{\lambda_k}) \vert}{\Vert \psi_{\lambda_k} \Vert_{L^2(\omega)}}\bigg] \\
    &\leq  C e^{-\lambda_k (\frac{T}{2} + T_0) + K \sqrt{\lambda_k}} \bigg[ \frac{1}{C_1(\omega)} + \frac{\Vert \Psi_{\lambda_k} \Vert_{L^2(0,\pi)^2}}{\Vert \psi_{\lambda_k} \Vert_{L^2(\omega)}}\bigg].
\end{aligned}
$$
We need an upper bound for $\Vert \Psi_{\lambda_k} \Vert_{L^2(0,\pi)^2}$ and a lower bound on $\Vert \psi_{\lambda_k} \Vert_{L^2(\omega)}$. \\
First, we apply the same reasoning to find an upper bound for $\Vert \Psi_{\lambda_k} \Vert_{L^2(0,\pi)^2}$ as the one we used previously for $\Vert \Psi_{\lambda_k} \Vert_{L^2(0,\pi)^2}$, and we get the same estimate as in \eqref{eq:boundpsipi} so: 
\begin{equation}
    \Vert \Psi_{\lambda_k} \Vert_{L^2(0,\pi)^2} \leq C \left(1+\sqrt{\lambda_k}\right).
\end{equation}
For $\Vert \psi_{\lambda_k} \Vert_{L^2(\omega)}$, we apply Lemma \ref{th:upboundpsi} and get
\begin{dmath}
    \mathcal{N}_k(I_{\lambda_k}(p,q) \phi_{\lambda_k} -q\phi_{\lambda_k} +\partial_x(p\phi_{\lambda_k}),\omega) \leq C \left(\sqrt{\lambda_k} \Vert \psi_{\lambda_k} \Vert_{L^2(\omega)} + \Vert I_{\lambda_k}(p,q) \phi_{\lambda_k}-q\phi_{\lambda_k} + \partial_x(p\phi_{\lambda_k}) \Vert_{L^2(\omega)} \right).
\end{dmath}
Since we assumed $(\supp p \cup \supp q)\cap \omega = \emptyset$ we get
\begin{equation}
    \mathcal{N}_k(I_{\lambda_k}(p,q) \phi_{\lambda_k} -q\phi_{\lambda_k} +\partial_x(p\phi_{\lambda_k}),\omega) \leq C \left(\sqrt{\lambda_k} \Vert \psi_{\lambda_k} \Vert_{L^2(\omega)} + \vert I_{\lambda_k} (p,q) \vert \right).
\end{equation}
Hence, we obtain the following bound 
\begin{equation*}
    \begin{aligned}
        \Vert r_{\lambda_k,0}[\gamma_{\lambda_k} h_{\lambda_k} + \theta_{\lambda_k} \Tilde{h}_{\lambda_k}] \Vert_{L^2((0,T) \times \omega)} &\leq K e^{- \lambda_k (\frac{T}{2} + T_0) + K \sqrt{\lambda_k}} \bigg[ \frac{1}{C_1(\omega)} \\
        &+ \frac{C \left(1+ \sqrt{\lambda_k}\right)\sqrt{\lambda_k}}{\mathcal{N}_k(I_{\lambda_k}(p,q) \phi_{\lambda_k} -q\phi_{\lambda_k} +\partial_x(p\phi_{\lambda_k}),\omega)}\bigg] \\
        &\leq C_T \frac{\lambda_k e^{-\lambda_k (\frac{T}{2} + T_0) }}{\mathcal{N}_k(I_{\lambda_k}(p,q) \phi_{\lambda_k} -q\phi_{\lambda_k} +\partial_x(p\phi_{\lambda_k}),\omega)}
    \end{aligned}
\end{equation*}
Using the definition of $T_0$ and the Young inequality, as on the previous term, we get, for all $\varepsilon>0$ and $k$ large enough
\begin{equation}
    \Vert r_{\lambda_k,0}[\gamma_{\lambda_k} h_{\lambda_k} + \theta_{\lambda_k} \Tilde{h}_{\lambda_k}] \Vert_{L^2((0,T) \times \omega)} \leq C_{T,\varepsilon} \lambda_k e^{-\lambda_k ( \frac{T}{4} - \varepsilon)}.
\end{equation}
Again, taking $\varepsilon=\frac{T}{8}$ in sufficient to get that the series
$$
\sum_{\lambda_k \in \Lambda_2}  r_{\lambda_k,0}(\cdot)[\gamma_{\lambda_k} h_{\lambda_k}(\cdot) + \theta_{\lambda_k} \Tilde{h}_{\lambda_k}(\cdot)]
$$
converges in $L^2(0,T)$ for all $T>0$. \\
Then we have proved that the system with an initial data that satisfies \eqref{eq:y0bound} is null-controllable at any time $T>0$. Now, take any initial data $y^0$, and $T>T_0$. We can take a control $v$ such that $v(t,\cdot)=0$ for all $t \in (0, T_0)$. Then, the problem reduces to considering the same system but with initial condition at $T_0$, that satisfies \eqref{eq:y0bound}. Indeed, we have:
\begin{equation}
\forall t>0, \: \forall \lambda_k \in \Lambda_1, \: \left \{
    \begin{aligned}
        (y(t),\Phi_{\lambda_k}) &= e^{-t\lambda}(y^0, \Phi_{\lambda_k}) \\
        (y(t),\Psi_{\lambda_k}) &= e^{-t\lambda}(y^0, \Psi_{\lambda_k} - t I_{\lambda_k}(p,q) \Phi_{\lambda_k})
    \end{aligned}
    \right. \\
\end{equation}
and
\begin{equation}
\forall t>0, \: \forall \lambda_k \in \Lambda_2, \: \left \{
    \begin{aligned}
        (y(t),\Phi_{\lambda_k}) &= e^{-t\lambda}(y^0, \Phi_{\lambda_k}) \\
        (y(t),\Psi_{\lambda_k}) &= e^{-t\lambda}(y^0, \Psi_{\lambda_k}).
    \end{aligned}
    \right.
\end{equation}
Since $(y^0, \Phi_{\lambda_k})$ and $(y^0, \Psi_{\lambda})$ are bounded, using these equations with $t=T_0$ gives that
\begin{equation} 
\forall \lambda_k \in \Lambda, \: \left \{
    \begin{aligned}
        \vert (y(T_0), \Phi_{\lambda_k}) \vert &\leq C e^{-\lambda_k T_0} \\
        \vert (y(T_0), \Psi_{\lambda_k}) \vert &\leq C e^{-\lambda_k T_0}
    \end{aligned}
    \right.
\end{equation}
Applying what we did previously, we can conclude that the system is null controllable for all time $T>T_0$. 

    \item Now, it only remains to prove that this time $T_0$ is minimal for the null controllability. \\
Let $T>0$. Assume that the system is null controllable at time $T$. Then, for any $y^0$, we have a control $v$, and from \eqref{eq:momphi} and \eqref{eq:mompsi2}, we get $\forall \lambda \in \Lambda_1$
$$
\begin{aligned}
    \vert (y^0, e^{-T\lambda_k} \Psi_{\lambda_k})_{L^2(0,\pi)} \vert &=  \displaystyle \int_0^T \left( v(t,\cdot), \psi_{\lambda_k} + t I_{\lambda_k} (p,q) \phi_{\lambda_k} \right)_{L^2(\omega)} e^{-\lambda_k (T-t)} dt \\
    & \leq \Vert v \Vert \sqrt{T} (\Vert \psi_{\lambda_k} \Vert_{L^2(\omega)} + T \vert I_{\lambda_k} (p,q) \vert \Vert \phi_{\lambda_k} \Vert_{L^2(\omega)}).
\end{aligned}
$$
Since the system is null controllable from any initial data, we have a bound on the control function $v$; there exists $C>0$ such that 
\begin{equation} \label{eq:boundv}
    \Vert v \Vert \leq C \Vert y^0 \Vert.
\end{equation}
We use that and get
$$
    \vert (y^0, e^{-T\lambda_k} \Psi_{\lambda_k})_{L^2(0,\pi)} \vert \leq C_T \Vert y^0 \Vert (\Vert \psi_{\lambda_k} \Vert_{L^2(\omega)} + T \vert I_{\lambda_k} (p,q) \vert \Vert \phi_{\lambda_k} \Vert_{L^2(\omega)}). 
$$
Now, take $y^0 = \Psi_{\lambda_k}$, and remark that $\Vert \Psi_{\lambda_k} \Vert_{L^2(0,\pi)} \geq 1$. Then
\begin{equation} \label{eq:majorationnc}
    e^{-\lambda_k T} \leq C_T (\Vert \psi_{\lambda_k} \Vert_{L^2(\omega)} + T \vert I_{\lambda_k} (p,q) \vert \Vert \phi_{\lambda_k} \Vert_{L^2(\omega)}).
\end{equation}
Moreover, using Lemma \ref{lemmabound} and the fact that we assumed $(\supp p \cup \supp q) \cap = \emptyset$, we get
$$
\Vert \psi_{\lambda_k} \Vert_{L^2(\omega)} + T \vert I_{\lambda_k} (p,q) \vert \Vert \phi_{\lambda_k} \Vert_{L^2(\omega)} \leq C \mathcal{N}_{k}(I_{\lambda_k}(p,q) \phi_{\lambda_k} -q\phi_{\lambda_k} +\partial_x(p\phi_{\lambda_k}), \omega).
$$
Replacing in \eqref{eq:majorationnc}, we finally get
\begin{equation} \label{eq:ccl1}
    T \geq - \frac{\ln C_T + \ln \mathcal{N}_k (I_{\lambda_k}(p,q) \phi_{\lambda_k} -q\phi_{\lambda_k} +\partial_x(p\phi_{\lambda_k}), \omega)}{\lambda_k}.
\end{equation}
Now, take $\lambda_k \in \Lambda_2$. We make the same kind of computations as before and get
$$
\vert (y^0, \Psi_{\lambda_k}) e^{-\lambda_k T} \vert \leq \Vert v \Vert \sqrt{T} \Vert \psi_{\lambda_k} \Vert_{L^2(\omega)}.
$$
Moreover, $\mathcal{N}_k(I_{\lambda_k}(p,q) \phi_{\lambda_k} -q\phi_{\lambda_k} +\partial_x(p\phi_{\lambda_k}), \omega) \neq 0$ so we can write
$$
\frac{\vert (y^0, \Psi_{\lambda_k}) e^{-\lambda_k T} \vert}{\sqrt{\lambda_k} \mathcal{N}_k(I_{\lambda_k}(p,q) \phi_{\lambda_k} -q\phi_{\lambda_k} +\partial_x(p\phi_{\lambda_k}), \omega)} \leq \frac{C \Vert y^0 \Vert \sqrt{T} \Vert \psi_{\lambda_k} \Vert_{L^2(\omega)}}{\sqrt{\lambda_k} \mathcal{N}_k(I_{\lambda_k}(p,q) \phi_{\lambda_k} -q\phi_{\lambda_k} +\partial_x(p\phi_{\lambda_k}), \omega)}.
$$
Taking $y^0 = \Psi_{\lambda_k}$, and using Lemma \ref{lemmabound} we get that
$$
    e^{-\lambda T} \leq C \sqrt{\lambda_k} \mathcal{N}_k(I_{\lambda_k}(p,q) \phi_{\lambda_k} -q\phi_{\lambda_k} +\partial_x(p\phi_{\lambda_k}), \omega)
$$
We deduce
\begin{equation} \label{eq:ccl2}
    T \geq - \frac{\ln{C \sqrt{\lambda_k}} + \ln \mathcal{N}_k (I_{\lambda_k}(p,q) \phi_{\lambda_k} -q\phi_{\lambda_k} +\partial_x(p\phi_{\lambda_k}), \omega)}{\lambda_k}.
\end{equation}

Now, by definition of $T_0$, we know that there exists an extraction $f: \N \rightarrow \N$ such that 
$$
\lim_{k \rightarrow \infty}{- \frac{\ln{\mathcal{N}_{f(k)}(I_{\lambda_k}(p,q) \phi_{\lambda_f(k)}+ \partial_x(p\phi_{\lambda_{f(k)}}) - q\phi_{\lambda_{f(k)}}, \omega)}}{\lambda_{f(k)}}} = T_0.
$$
There are two possibilities:
\begin{itemize}
    \item Whether there is an infinity of $\{f(k)\}$ in $\Lambda_1$, and in that case we use \eqref{eq:ccl1} to conclude that $T \geq T_0$.
    \item Or there is an infinity of $\{f(k)\}$ in $\Lambda_2$, and in that case we use \eqref{eq:ccl2} to conclude that $T \geq T_0$.
\end{itemize}
In both cases, we proved that if the system is null controllable at time $T$, then $T \geq T_0$. We can then conclude that the minimal null control time is exactly $T_0$.
\end{itemize}
\end{proof}
\clearpage
\bibliography{refs}
\end{document}